\theoremstyle{plain}
\newtheorem{thm}{Theorem}[section]
\newtheorem{lem}[thm]{Lemma}
\newtheorem{prop}[thm]{Proposition}
\theoremstyle{definition}
\newcommand{\Delete}[1]{}
\newcommand\scalemath[2]{\scalebox{#1}{\mbox{\ensuremath{\displaystyle #2}}}}
\newcommand\blfootnote[1]{%
  \begingroup
  \renewcommand\thefootnote{}\footnote{#1}%
  \addtocounter{footnote}{-1}%
  \endgroup
}
\title{On non-bipartite graphs with integral signless Laplacian eigenvalues at most $6$}
\author{
  {\sc Semin OH}
  \footnote{KNU G-LAMP Project Group, KNU Institute of Basic Sciences, Kyungpook National University, Daegu, 41566, Republic of Korea. \textit{Email:} \texttt{semin@knu.ac.kr}}
  \blfootnote{All authors contributed equally to this work.}
  \and
  {\sc Jeong Rye PARK}
  \footnote{Department of Mathematics, Kyungpook National University, Daegu, 41566, Republic of Korea. \textit{Email:} \texttt{parkjr@knu.ac.kr}}
  \and
  {\sc Jongyook PARK}
  \footnote{Department of Mathematics, Kyungpook National University, Daegu, 41566, Republic of Korea. \textit{Email:} \texttt{jongyook@knu.ac.kr}}
  \and
  {\sc Yoshio SANO}
  \footnote{Division of Information Engineering, Faculty of Engineering, Information and Systems, University of Tsukuba, Tsukuba, Ibaraki, 305-8537, Japan. \textit{Email:} \texttt{sano@cs.tsukuba.ac.jp}} 
} 
\date{}
\begin{document}

\maketitle


\begin{abstract}

In this paper, 
we completely classify the connected non-bipartite graphs 
with integral signless Laplacian eigenvalues at most $6$.



\end{abstract}


\noindent
\textbf{Keywords}: 
{Graph spectrum, 
  Integral graph,
  Non-bipartite graph,
  Signless Laplacian, 
  $Q$-integral graph, 
  Spectral radius}

\noindent
\textbf{Mathematics Subject Classification}: {05C50}

\section{Introduction}
All graphs under consideration in this paper are finite, connected, and simple.
Definitions and notations not introduced are provided in the next section.

The {\it spectrum} of a graph $G$ is, by definition, the spectrum of its adjacency matrix $A(G)$: the set of the eigenvalues of $G$ together with their multiplicities. 
There are several literatures on the graph spectrum; see~\cite {CDS80, CRS10, BH11}.

In 1974, integral graphs were introduced by Harary and Schwenk~\cite{HS}.
A graph $G$ is called an \emph{integral graph} if it has only integral eigenvalues. For a survey on integral graphs, see \cite{BCRSS02}.
Integral graphs exist in unlimited numbers across all classes of graphs and among graphs of all orders.
There are some results on some particular classes of integral graphs:
trees~\cite{WS79, W79, HN98};  cubic graphs~\cite{BC76, Cve75, Sch78}.
There are some results on integral graphs with small vertex degrees: maximum vertex degree 3~\cite{CGT74, BC76, Cve75, Sch78}; maximum vertex degree 4~\cite{BS01a, BS01b, RS86, Lep05, RS95}.
The case of integral graphs with maximum vertex degree 3 is completely classified, but the classification of integral graphs with maximum vertex degree 4 is still open.
In general, integral graphs are scarce and hard to find.

Let $R$ be the $n \times m$ vertex-edge incidence matrix of a graph $G$. 
Denote the line graph of $G$ by $L_G$. 
The following relations are well known~\cite{CRS2}:
\begin{equation} \label{eq:R}
  RR^T =A(G) +D(G) \textrm{~and~} R^TR=A(L_G) +2I_{m},  
\end{equation}
where $D(G)$ is the diagonal matrix of vertex-degrees in $G$
and $I_{m}$ is the identity matrix of order $m$. 
The characteristic polynomial $P_G(\lambda)=\det(\lambda I_{n}-A(G))$ is called the {\it characteristic polynomial} of $G$.
From~(\ref{eq:R}), it immediately follows that
\[P_{L_G}(\lambda) = (\lambda + 2)^{m-n}\textit{det}((\lambda+2) I_{n} - Q(G)),\]
where $Q(G)=A(G)+D(G)$. 
It means that $L_G$ is an integral graph if and only if $Q(G)$ only has integral eigenvalues.
We call $Q(= Q(G))$ the {\em signless Laplacian matrix} of $G$. 
A {\em $Q$-integral graph} is a graph whose signless Laplacian matrix has only integral eigenvalues. 

In~\cite{Sta07}, Stani\'{c} studied $Q$-integral graphs and found all $172$ connected $Q$-integral graphs with up to $10$ vertices. 
After that, in~\cite{SS08}, Simi\'{c} and Stani\'{c} classified $Q$-integral graphs with maximum edge-degree at most $4$, where the \emph{edge-degree} of an edge of a graph $G$ is the number of edges incident to the edge. 
They also gave some partial results of the classification of $Q$-integral graphs with maximum edge-degree $5$. 
However, the classification of Q-integral graphs with a maximum edge-degree at least $5$ is still open. There are related studies by~\cite{PS19, PS23, PS24}.

In~\cite{PS19}, Park and Sano used the $Q$-spectral radius to classify $Q$-integral graphs, where the \emph{$Q$-spectral radius} of a graph $G$ is the largest eigenvalue of $Q(G)$.
A graph is called \emph{edge-non-regular} if it has at least two edges with different edge-degrees.
In~{\cite{SS08,PS19,OPPS}}, the authors proved that if for each connected $Q$-integral graph with $Q$ spectral radius $6$, then it is one of the known 17 graphs or an edge-non-regular graph with the maximum edge-degree $5$.
Under the assumption that $G$ is a connected $Q$-integral graph with spectral radius $6$ and the maximum edge-degree $5$,
we proved the following theorem:
\begin{thm}\label{thm:fish}
Let $G$ be a connected $Q$-integral graph with $Q$-spectral radius $6$. 
Suppose that $G$ has the maximum edge-degree $5$. 
If $G$ is non-bipartite, then $G$ is the graph with 
the vertex set $V(G) = \{v_1, v_2, v_3, v_4, v_5, v_6 \}$ and 
the edge set $E(G) = \{v_1v_2, v_1v_3, v_2v_3, v_3v_4, v_3v_5, v_4v_5, v_4v_6, v_5v_6 \}$ (see Figure~\ref{fig:fish}). 
\end{thm}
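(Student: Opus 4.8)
The plan is to pin down the structure of $G$ from three numerical invariants that are forced by the hypotheses: the $Q$-spectrum (which sums to $\sum_v d_v = 2m$ and whose squares sum to $\sum_v d_v^2 + 2m = \operatorname{tr}Q^2$), the fact that the line graph $L_G$ is integral with least eigenvalue $\ge -2$, and the maximum-edge-degree-$5$ condition. First I would record that since the $Q$-spectral radius is $6$, Equation~(\ref{eq:R}) gives $P_{L_G}(\lambda)=(\lambda+2)^{m-n}\det((\lambda+2)I_n-Q(G))$, so $L_G$ is a connected integral graph with largest eigenvalue $6-2=4$ and smallest eigenvalue $\ge -2$; such graphs are severely constrained (generalized line graphs / the $E_8$-type exceptional graphs), and the maximum-edge-degree-$5$ hypothesis says $L_G$ has maximum degree $5$. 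I would combine this with the known list of small integral graphs with $\lambda_{\min}\ge -2$, or argue directly from interlacing, to bound $n$ and $m$.

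Next, I would extract the $Q$-spectrum itself. Because $G$ is non-bipartite, $0$ is not a $Q$-eigenvalue, so the integral $Q$-spectrum lies in $\{1,2,3,4,5,6\}$ with $6$ a simple eigenvalue (Perron--Frobenius, since $Q$ is nonnegative and $G$ connected). Writing the multiplicities as unknowns $m_1,\dots,m_6$ with $m_6=1$, the identities $\sum_i i\,m_i = 2m$, $\sum_i i^2 m_i = 2m + \sum_v d_v^2$, together with $\sum_i m_i = n$ and the edge-degree bound (each vertex $v$ satisfies $d_v(v) \le$ something controlled by $5$, and more usefully every edge $uv$ has $d_u + d_v - 2 \le 5$, i.e. $d_u + d_v \le 7$), leave only finitely many candidate spectra for each small $n$. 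I expect that the edge condition $d_u+d_v\le 7$ for all edges, plus connectivity and non-bipartiteness, already forces $n$ to be small (the presence of eigenvalue $6$ and a vertex of degree close to $5$ pushes $n$ down), and then a short case analysis on $(n,m)$ and the degree sequence isolates the degree sequence of the target graph — which has degrees $(2,2,4,4,2,2)$? Let me instead note the target graph's degree sequence is $(2,2,4,4,2,2)$ with $n=6$, $m=8$; actually $v_1,v_2$ have degree $2$, $v_3,v_4,v_5$ have degree... $v_3$ is adjacent to $v_1,v_2,v_4,v_5$ (degree $4$), $v_4$ to $v_3,v_5,v_6$ and $v_1$? no — so the degree sequence is $(2,2,4,3,3,2)$, summing to $16=2m$, and the edge-degree condition is tight exactly on the edges at $v_3$.

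With the degree sequence and $(n,m)$ fixed, the final step is a finite reconstruction: enumerate all connected non-bipartite graphs with that degree sequence, discard those whose $Q$-spectrum is not $\{1,2,\dots\}$-integral or whose $Q$-spectral radius is not $6$, and check that exactly the claimed graph survives. Here I would lean on the line-graph picture again — $L_G$ must be one specific integral graph on $m=8$ vertices with spectrum a sub-multiset of $\{4,3,2,1,0,-1,-2\}$ and max degree $5$ — which cuts the search drastically. The main obstacle I anticipate is the bounding step: showing a priori that $n$ (equivalently $m$) cannot be large. Naively the $Q$-spectrum sitting inside $\{1,\dots,6\}$ does not by itself bound $n$; one genuinely needs the interaction between the edge-degree bound, the simplicity of $6$, and the integrality/least-eigenvalue constraint on $L_G$ (e.g. that $L_G$ connected with $\lambda_{\max}=4$ and few distinct eigenvalues has bounded diameter, hence bounded order once the degree is bounded by $5$). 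Once $n$ is bounded by a small explicit constant, everything else is a mechanical, if tedious, finite check, so I would organize the write-up as: (1) translate to $L_G$; (2) bound $n$; (3) fix the degree sequence via trace identities and $d_u+d_v\le 7$; (4) enumerate and verify.
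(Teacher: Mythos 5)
Your reduction to the line graph and the trace identities are fine as far as they go, but the proof does not close: the step you yourself flag as ``the main obstacle'' --- bounding $n$ --- is where essentially all of the difficulty of this theorem lives, and none of the ingredients you list actually delivers a usable bound. The observation that $L_G$ has least eigenvalue $\ge -2$ carries no information ($L_G$ is a line graph by construction), and the classification of integral graphs with $\lambda_{\min}\ge -2$ does not restrict it further. What remains is: $L_G$ is a connected integral graph with $\lambda_{\max}=4$, at most $7$ distinct eigenvalues, and maximum degree $5$. The diameter bound this gives (diameter $\le 6$, degree $\le 5$) allows $m$ in the thousands, so ``enumerate and verify'' is not a finite check anyone can perform; and the moment identities $\sum_i m_i=n$, $\sum_i i\,m_i=2m$, $\sum_i i^2m_i=2m+\sum_v d_v^2$ involve $n$, $m$ and $\sum_v d_v^2$ as additional unknowns, so they cannot ``fix the degree sequence'' without an independent bound on $n$. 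Your heuristic that the eigenvalue $6$ together with a high-degree vertex ``pushes $n$ down'' is not substantiated and is in fact false in spirit: the same hypotheses without the edge-non-regularity produce $Q$-integral graphs on $10$ and $12$ vertices (the cubic integral graphs $G_6$, $G_7$), and the bipartite analogue $S(K_5)$ has $15$ vertices. Worse, your route lands you inside the classification of integral line graphs of maximum degree $5$, which is precisely the open problem this line of work is chipping away at; quoting ``the known list'' of such graphs is circular.

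The paper proceeds quite differently, and locally rather than globally. It fixes an edge $xy$ of edge-degree $5$; since Proposition~\ref{prop:verdeg} caps the vertex degrees at $\rho-2=4$, necessarily $\deg(x)=4$ and $\deg(y)=3$, and the argument splits on whether $x$ and $y$ have $2$, $1$, or $0$ common neighbours. In each case it grows a connected induced $Q$-subgraph one vertex at a time, using the interlacing constraints of Proposition~\ref{prop:ev} (largest eigenvalue of every principal submatrix of $Q$ at most $6$ with equality only for $G$ itself, second largest at most $5$, smallest at least $1$) to kill every extension except the one leading to the striped fish graph. The termination of that search is what replaces your missing bound on $n$. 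If you want to salvage your global approach, you would need a genuinely new a priori bound on the order of a connected non-bipartite $Q$-integral graph with $Q$-spectral radius $6$ and an edge of edge-degree $5$; as written, steps (2)--(4) of your outline do not constitute a proof.
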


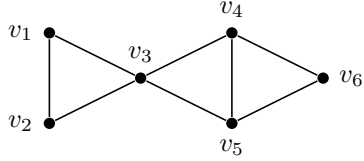
\begin{figure}[!ht]
    \centering
      \begin{tikzpicture}[auto, node distance=2cm, semithick,scale=0.3]
        \tikzstyle{vertex}=[circle,fill,inner sep=1.5pt]
        \node[vertex,label=left:$v_1$] (x0) at (-6, 3)  {};
        \node[vertex,label=above:$v_3$] (x)  at (-2, 1) {};
        \node[vertex,label=below:$v_5$] (y)  at (2, -1) {};
    
        \node[vertex,label=left:$v_2$] (x1) at (-6,-1) {};
    
        \node[vertex,label=above:$v_4$] (y1) at (2, 3) {};
        \node[vertex,label=right:$v_6$] (y0) at (6, 1) {};
        
        \path[-]
        (x) edge (y)
    
        (x) edge (x0)
        (x) edge (x1)
        (x) edge (y1)
        
        (y) edge (y0)
        (y) edge (y1)
        
        (x0) edge (x1)
        (y0) edge (y1);
      \end{tikzpicture}
    \caption{The striped fish graph.}
    \label{fig:fish}
    \end{figure}

As a continuation of the previous work~\cite{OPPS} and~\cite{SS08}, in this paper, we completely classify the connected non-bipartite $Q$-integral graphs with $Q$-spectral radius at most $6$.


\begin{thm}\label{thm:main}
    Let $G$ be a connected non-bipartite $Q$-integral graph with spectral radius at most $6$.
    Then $G$ is isomorphic to one of the graphs in Figure~\ref{fig:main}.
\end{thm}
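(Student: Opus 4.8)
The plan is to proceed by a careful case analysis on the $Q$-spectral radius, which by hypothesis is an integer in $\{0,1,2,3,4,5,6\}$, combined with known interlacing and structural constraints. First I would dispose of the small cases: a connected graph with $Q$-spectral radius at most $2$ must be $K_1$ or $K_2$ (both bipartite) or the triangle $K_3$ (for which the $Q$-spectrum is $\{4,1,1\}$, so actually even $K_3$ forces radius $\geq 4$), so non-bipartiteness already pushes us toward larger radii. Recalling that for a connected graph the $Q$-spectral radius equals $2\Delta$ only for regular graphs and in general satisfies $\max_{uv\in E}(d(u)+d(v)) \le q_{\max} \le 2\Delta$, the constraint $q_{\max}\le 6$ forces the maximum edge-degree to be at most $5$ and the maximum degree to be at most $6$ with tight restrictions: in particular any vertex of degree $d$ lies in an edge of edge-degree at least $2(d-1)-\text{(number of triangles through that edge)}$, which quickly bounds degrees once a triangle is present.

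The core of the argument is then to split according to the maximum edge-degree $k\in\{3,4,5\}$ (edge-degree $\le 2$ forces $q_{\max}\le 4$ and small bipartite-or-triangle cases). For $k\le 4$ I would invoke the Simi\'c--Stani\'c classification~\cite{SS08} of $Q$-integral graphs with maximum edge-degree at most $4$ and simply extract the non-bipartite ones with $q_{\max}\le 6$ from their list. For $k=5$ the $Q$-spectral radius is forced to be exactly $6$ (since $q_{\max}\ge k+1$ would be needed for a $Q$-integral graph with an edge of edge-degree $5$, as $q_{\max}$ is an integer strictly exceeding the average of the two relevant degrees which sum to at least... here one uses $q_{\max} > \text{(larger endpoint degree)}$ and $\ge$ the edge-degree plus a bit), so Theorem~\ref{thm:fish} applies verbatim and yields the striped fish graph as the unique non-bipartite example in that regime. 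The remaining genuine work is the band $k\le 4$ together with possibly edge-degree exactly $5$ but $q_{\max}<6$, which I would rule out by the same interlacing bound showing $q_{\max}\ge 6$ whenever an edge-degree-$5$ edge exists.

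Concretely the key steps, in order, are: (1) establish the bounds $q_{\max}\le 6 \Rightarrow \Delta\le 6$ and maximum edge-degree $\le 5$, and note $q_{\max}\in\mathbb{Z}$; (2) handle $q_{\max}\le 4$ directly, extracting the non-bipartite $Q$-integral graphs (these are the small graphs built from triangles with low degree, e.g. $K_3$, the "bull"/"paw"-type graphs, and a short finite list verifiable by hand or by citing~\cite{Sta07}); (3) for $q_{\max}\in\{5,6\}$ use the relationship with line graphs via~(\ref{eq:R}): $L_G$ is integral with smallest eigenvalue $\ge -2$ and largest eigenvalue $q_{\max}-2\le 4$, so $L_G$ is a connected integral graph with spectrum in $[-2,4]$ — such graphs are line graphs of generalized line graphs or come from root systems $A_n, D_n, E_8$, and integral ones with spectral radius $\le 4$ form a manageable family that one can enumerate and pull back to candidates for $G$; (4) among the pulled-back candidates, test non-bipartiteness (equivalently, $0$ is not a $Q$-eigenvalue) and verify $Q$-integrality, arriving at the list in Figure~\ref{fig:main}. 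The main obstacle I anticipate is step (3)–(4): even though the spectral window $[-2,4]$ is small, the family of connected integral graphs with least eigenvalue $\ge -2$ is not a two-line list, so one must either marshal the exceptional-graph/root-system machinery carefully (Cameron--Goethals--Seidel--Shult) or, more in the spirit of~\cite{OPPS}, organize a finite but somewhat intricate combinatorial search over possible degree sequences and triangle structures compatible with $q_{\max}=6$, being careful not to miss the edge-non-regular case with maximum edge-degree $5$ that Theorem~\ref{thm:fish} was designed to resolve.
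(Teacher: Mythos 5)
Your high-level skeleton overlaps substantially with the paper's proof (which runs: radius $\le 5$ is settled by the Simi\'c--Stani\'c classification of maximum edge-degree $\le 4$; radius $6$ is settled by Theorem~\ref{thm:PS19}, whose non-bipartite members of the $17$-graph list give $G_3$--$G_7$, plus Theorem~\ref{thm:fish} for the edge-non-regular, edge-degree-$5$ case giving $G_8$). But there is a concrete gap in your case split. You assert that $q_{\max}\le 6$ forces the maximum edge-degree to be at most $5$, and your entire analysis then branches over $k\in\{3,4,5\}$. That assertion is false as stated: $K_{1,3}\square K_2$ is $Q$-integral with $Q$-spectral radius $6$ and contains an edge of edge-degree $6$. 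Even restricting to non-bipartite graphs, the available degree bound (Proposition~\ref{prop:verdeg}) gives maximum vertex degree $\le 4$ and maximum edge-degree $\le 2\rho-6=6$, so an edge both of whose endpoints have degree $4$ is not excluded by any inequality of the kind you invoke (``$q_{\max}$ is $\ge$ the edge-degree plus a bit'' is not a theorem). The paper eliminates this case only by appealing to Theorem~\ref{thm:PS19}, a substantial prior result, not by a local spectral estimate; your proposal has no mechanism to close it.

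Separately, your steps (3)--(4) --- enumerating connected integral graphs with spectrum in $[-2,4]$ via the Cameron--Goethals--Seidel--Shult root-system machinery and pulling back to candidates for $G$ --- constitute a genuinely different and considerably heavier route to the radius-$6$ case than the paper takes, and you leave it unexecuted (you flag it yourself as the main obstacle). Since this is exactly the step that would produce the graphs $G_3$--$G_7$ (the non-bipartite cubic integral graphs together with the check that no other candidates survive), the proposal as written never actually arrives at the list in Figure~\ref{fig:main}: it reduces the theorem to an enumeration problem that is the hard part. The fix closest to your outline is to replace both the false edge-degree bound and the root-system enumeration by a direct citation of Theorem~\ref{thm:PS19}, after which your use of the Simi\'c--Stani\'c list for $k\le 4$ and of Theorem~\ref{thm:fish} for the remaining edge-non-regular case does match the paper's argument. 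Minor further slips: $\Delta\le 6$ should be $\Delta\le 4$ in the non-bipartite setting, and the forcing of $\rho=6$ from an edge of edge-degree $5$ needs the inequality $5\le 2\rho-6$ rather than the vague interlacing remark you give.
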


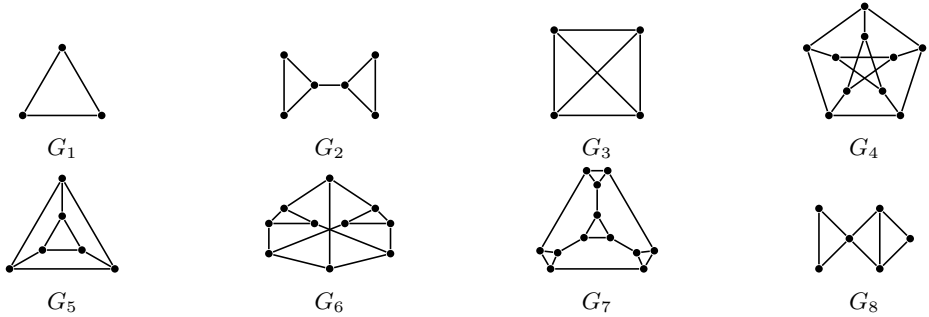
\begin{figure}[!ht]
    \captionsetup[subfigure]{labelformat=empty}
    \centering
    
             
    \begin{subfigure}[b]{0.2\textwidth}
        \centering
      \begin{tikzpicture}[auto, node distance=1cm, semithick, scale=0.2]
        \tikzstyle{vertex}=[circle, fill, inner sep=1pt]
    
        \node[vertex] (c0) at (90 :3) {};
        \node[vertex] (c1) at (210:3) {};
        \node[vertex] (c2) at (330:3) {};
           
        \path[-]
            (c0) edge (c1)
            (c0) edge (c2)
            (c2) edge (c1);
             
      \end{tikzpicture}
      \caption{$G_1$}
    \end{subfigure}
    \begin{subfigure}[b]{0.2\textwidth}
        \centering
      \begin{tikzpicture}[auto, node distance=1cm, semithick, scale=0.2]
        \tikzstyle{vertex}=[circle, fill, inner sep=1pt]
        
        \node[vertex] (x)  at (-1, 0) {};
        \node[vertex] (y)  at (1, 0) {};
        \node[vertex] (x0) at (-3, 2) {};
        \node[vertex] (x2) at (-3,-2) {};
        \node[vertex] (y0) at (3, 2) {};
        \node[vertex] (y2) at (3,-2) {};
        
        \path[-]
        (x) edge (y)
        (x) edge (x0)
        (x) edge (x2)
        (y) edge (y0)
        (y) edge (y2)
        (x0) edge (x2)
        (y0) edge (y2);
             
      \end{tikzpicture}
      \caption{$G_2$}
    \end{subfigure}
    \begin{subfigure}[b]{0.2\textwidth}
        \centering
      \begin{tikzpicture}[auto, node distance=1cm, semithick, scale=0.2]
        \tikzstyle{vertex}=[circle, fill, inner sep=1pt]
    
         \node[vertex] (c0) at (45+90*0:4) {};
         \node[vertex] (c1) at (45+90*1:4) {};
         \node[vertex] (c2) at (45+90*2:4) {};
         \node[vertex] (c3) at (45+90*3:4) {};
         
         \path[-]
           (c0) edge (c1)
           (c1) edge (c2)
           (c2) edge (c3)
           (c3) edge (c0)
           (c0) edge (c2)
           (c1) edge (c3)
           ;
             
      \end{tikzpicture}
      \caption{$G_3$}
    \end{subfigure}
    \begin{subfigure}[b]{0.2\textwidth}
        \centering
      \begin{tikzpicture}[auto, node distance=1cm, semithick, scale=0.2]
        \tikzstyle{vertex}=[circle, fill, inner sep=1pt]
    
        \node[vertex] (n1)  at (90+72*0:2) {};
        \node[vertex] (n2)  at (90+72*1:2) {};
        \node[vertex] (n3)  at (90+72*2:2) {};
        \node[vertex] (n4)  at (90+72*3:2) {};
        \node[vertex] (n5)  at (90+72*4:2) {};

        \node[vertex] (b1)  at (90+72*0:4) {};
        \node[vertex] (b2)  at (90+72*1:4) {};
        \node[vertex] (b3)  at (90+72*2:4) {};
        \node[vertex] (b4)  at (90+72*3:4) {};
        \node[vertex] (b5)  at (90+72*4:4) {};    
    
        \draw (n1) -- (n3);
        \draw (n3) -- (n5);
        \draw (n5) -- (n2);
        \draw (n2) -- (n4);
        \draw (n4) -- (n1);
    
        \draw (b1) -- (b2);
        \draw (b2) -- (b3);
        \draw (b3) -- (b4);
        \draw (b4) -- (b5);
        \draw (b5) -- (b1);
    
        \draw (n1) -- (b1);
        \draw (n2) -- (b2);
        \draw (n3) -- (b3);
        \draw (n4) -- (b4);
        \draw (n5) -- (b5);
             
      \end{tikzpicture}
      \caption{$G_4$}
    \end{subfigure}
    \begin{subfigure}[b]{0.2\textwidth}
        \centering
      \begin{tikzpicture}[auto, node distance=1cm, semithick, scale=0.2]
        \tikzstyle{vertex}=[circle, fill, inner sep=1pt]
    
        \node[vertex] (c0) at (90 :4) {};
        \node[vertex] (c1) at (210:4) {};
        \node[vertex] (c2) at (330:4) {};
        
        \node[vertex] (b0) at (90 :1.5) {};
        \node[vertex] (b1) at (210:1.5) {};
        \node[vertex] (b2) at (330:1.5) {};

        \path[-]
            (c0) edge (c1)
            (c0) edge (c2)
            (c2) edge (c1)
            
            (b0) edge (b1)
            (b0) edge (b2)
            (b2) edge (b1)
            
            (c0) edge (b0)
            (c1) edge (b1)
            (c2) edge (b2)
            ;
             
      \end{tikzpicture}
      \caption{$G_5$}
    \end{subfigure}
      \begin{subfigure}[b]{0.2\textwidth}
        \centering
        \begin{tikzpicture}[auto,node distance=1cm, semithick, scale=0.2]
          \tikzstyle{vertex}=[circle,fill,inner sep=1pt]
              
        \node[vertex] (n1)  at ( 0, 6) {};
        \node[vertex] (n2)  at (-3, 4) {};
        \node[vertex] (n3)  at (-4, 3) {};
        \node[vertex] (n4)  at (-1, 3) {};
        \node[vertex] (n5)  at ( 3, 4) {};
        \node[vertex] (n6)  at ( 1, 3) {};
        \node[vertex] (n7)  at ( 4, 3) {};
        \node[vertex] (n8)  at ( 4, 1) {};
        \node[vertex] (n9)  at (-4, 1) {};
        \node[vertex] (n10) at ( 0, 0) {};
    
    \draw (n1) -- (n2);
    \draw (n1) -- (n5);
    \draw (n2) -- (n3);
    \draw (n3) -- (n4);
    \draw (n4) -- (n2);
    \draw (n5) -- (n6);
    \draw (n6) -- (n7);
    \draw (n7) -- (n5);
    \draw (n3) -- (n9);
    \draw (n7) -- (n8);
    \draw (n4) -- (n8);
    \draw (n6) -- (n9);
    \draw (n1) -- (n10);
    \draw (n9) -- (n10);
    \draw (n8) -- (n10);

        \end{tikzpicture}
        \caption{$G_6$}
      \end{subfigure}
      \begin{subfigure}[b]{0.2\textwidth}
        \centering
        \begin{tikzpicture}[auto,node distance=1cm, semithick, scale=0.2]
          \tikzstyle{vertex}=[circle,fill,inner sep=1pt]
              
        \node[vertex] (c0) at (90 :3) {};
        \node[vertex] (c1) at (210:3) {};
        \node[vertex] (c2) at (330:3) {};
        
        \node[vertex] (b0) at (90 :1) {};
        \node[vertex] (b1) at (210:1) {};
        \node[vertex] (b2) at (330:1) {};

        \node[vertex] (x0) at (90+10:4) {};
        \node[vertex] (x1) at (90-10:4) {};

        \node[vertex] (y0) at (210+10:4) {};
        \node[vertex] (y1) at (210-10:4) {};

        \node[vertex] (z0) at (330+10:4) {};
        \node[vertex] (z1) at (330-10:4) {};

        \path[-]
            (b0) edge (b1)
            (b0) edge (b2)
            (b2) edge (b1)
            
            (c0) edge (b0)
            (c1) edge (b1)
            (c2) edge (b2)

            (c0) edge (x0)
            (c1) edge (y0)
            (c2) edge (z0)
            (c0) edge (x1)
            (c1) edge (y1)
            (c2) edge (z1)

            (x0) edge (x1)
            (y0) edge (y1)
            (z0) edge (z1)

            (x0) edge (y1)
            (y0) edge (z1)
            (z0) edge (x1)
            ;
        \end{tikzpicture}
        \caption{$G_7$}
      \end{subfigure}
      \begin{subfigure}[b]{0.2\textwidth}
        \centering
        \begin{tikzpicture}[auto,node distance=1cm, semithick, scale=0.2]
          \tikzstyle{vertex}=[circle,fill,inner sep=1pt]
              
        \node[vertex] (x0) at (-4, 3)  {};
        \node[vertex] (x)  at (-2, 1) {};
        \node[vertex] (y)  at (0, -1) {};
        \node[vertex] (x1) at (-4,-1) {};
        \node[vertex] (y1) at (0, 3) {};
        \node[vertex] (y0) at (2, 1) {};
        
        \path[-]
        (x) edge (y)
    
        (x) edge (x0)
        (x) edge (x1)
        (x) edge (y1)
        
        (y) edge (y0)
        (y) edge (y1)
        
        (x0) edge (x1)
        (y0) edge (y1);
        \end{tikzpicture}
        \caption{$G_8$}
      \end{subfigure}
  \caption{Connected non-bipartite $Q$-integral graphs with $Q$-spectral radius at most $6$}
    \label{fig:main}
    \end{figure}

\begin{table}[!ht]
  \centering
  \begin{tabular}{c|c|c|ccccc}
    \toprule
  Graph & Number of vertices & Number of edges & \multicolumn{5}{c}{Q-eigenvalues} \\
  \midrule
  $G_{1}$ & 3  & 3  & $4^1$ & $1^2$ & \\
  $G_{2}$ & 6  & 7  & $5^1$ & $4^1$ & $2^1$ & $1^3$ \\
  $G_{3}$ & 4  & 6  & $6^1$ & $2^3$ & \\
  $G_{4}$ & 10 & 15 & $6^1$ & $4^5$ & $1^4$ & \\
  $G_{5}$ & 6  & 9  & $6^1$ & $4^1$ & $3^2$ & $1^2$ \\
  $G_{6}$ & 10 & 15 & $6^1$ & $5^1$ & $4^3$ & $2^2$ & $1^3$ \\
  $G_{7}$ & 12 & 18 & $6^1$ & $5^3$ & $3^2$ & $2^3$ & $1^3$ \\
  $G_{8}$ & 6  & 8  & $6^1$ & $4^1$ & $2^2$ & $1^2$ \\
    \bottomrule
\end{tabular}
  \caption{Information of connected non-bipartite $Q$-integral graphs with $Q$-sepctral radius at most $6$}
\end{table}

This paper is organized as follows. 
In Section~\ref{sec:pre}, we prepare definitions and preliminaries, including properties of $Q$-integral graphs. 
The proofs of Theorem~\ref{thm:fish} and Theorem~\ref{thm:main} will be given in Section~\ref{sec:main}.
First, we present previous results on $Q$-integral graphs with $Q$-spectral radius at most $5$.
The proof methodology is applied iteratively to each induced subgraph of $Q$-integral graphs with a $Q$-spectral radius of $6$. To do this, we explore possible induced subgraphs of non-bipartite $Q$-integral graphs with a $Q$-spectral radius of 6. The proof idea will be explained in detail.
Also, we implemented this iterative proof methodology using a computer to check whether a non-bipartite $Q$-integral graph containing a given induced subgraph exists, and the pseudocode is provided in the Appendix.
It can be used for finding non-bipartite $Q$-integral graphs with $Q$-spectral radius larger than $6$.

\section{Preliminaries} \label{sec:pre}

Let $G$ be a graph with $n$ vertices. We denote the vertex set of $G$ by $V(G)$ and the edge set of $G$ by $E(G)$.
The \emph{adjacency matrix} $A(G) = (a_{xy})$ of $G$ is the $n \times n$ matrix whose rows and columns are indexed by the vertices of $G$ and $a_{xy}=1$ whenever $x$ and $y$ are adjacent and $a_{xy}=0$ otherwise. 
The {\em eigenvalues} of a graph $G$ are those of $A(G)$.
The \emph{degree} $\deg_{G}(x)$ of a vertex $x$ in a graph $G$ is the number of vertices adjacent to $x$.
We denote the complete graph with $n$ vertices, the complete bipartite graph with two parts of sizes $n$ and $m$, and the cycle with $n$ vertices by $K_n$, $K_{n,m}$ and $C_n$, respectively. 
The {\it subdivision} $S(G)$ of $G$ is the graph obtained from $G$ by replacing all the edges with paths of length $2$. The {\it Cartesian product} of graphs $G$ and $H$ is the graph $G\square H$ with vertices $V(G\square H) = V(G)\times V(H)$, and for which $(x,u)(y,v)$ is an edge if $x=y$ and $uv\in E(H)$, or $xy\in E(G)$ and $u=v$.

We recall some results on the signless Laplacian eigenvalues of graphs that we use in this paper. For unexplained terminologies, examples, and
more details, see \cite{PS19}. We will use the Perron-Frobenius Theorem~\cite[Theorem 8.8.1]{god} and the Interlacing Theorem~\cite[Theorem 9.1.1]{god}, which are powerful tools for comparing eigenvalues of a graph and its induced subgraphs.

Let $G$ be a graph and $Q(=Q(G))$ be the signless Laplacian matrix of $G$. 
Let $W$ be a non-empty subset of the vertex set $V(G)$ and $M$ be the principal submatrix of $Q$ indexed by $W$.
Note that for any $y\in W$, the $(y,y)$-entry of $M$ is $\deg_G(y)$. This means that the matrix $M$ may not equal the signless Laplacian matrix of the subgraph of $G$ induced by $W$. Because of this situation, we need the following notations.

A \emph{$Q$-graph} is  a pair $(G,d)$ of a graph $G$ and a function $d: V(G) \to \mathbb{Z}_{\geq 0}$ such that $d(x) \geq \deg_G(x)$ for any $x \in V(G)$. An (induced) \emph{$Q$-subgraph} of a graph $G$ is an (induced) subgraph $H$ of $G$ together with a function $d:V(H) \to \mathbb{Z}_{\geq 0}$ defined by $d(x) = \deg_G(x)$. Note that any $Q$-subgraph of a graph is a $Q$-graph. The \emph{$Q$-matrix} of a $Q$-graph $(G,d)$ is the matrix $Q(G,d)$ defined by 
\[
  (Q(G,d))_{xy} = \left\{
    \begin{array}{ll}
      d(x) & x = y \\
      1 & xy \in E(G) \\
      0 & xy \not\in E(G). \\
    \end{array}
  \right.
\]
The \emph{eigenvalues} of a $Q$-graph $(G,d)$ are 
the eigenvalues of the $Q$-matrix $Q(G,d)$.
Note that for a connected graph $G$, the $Q$-matrix of $(G,d)$ is irreducible since the adjacency matrix of $G$ is irreducible.

The $Q$-matrix of an induced $Q$-subgraph $H$ of a graph $G$ 
is equal to the principal submatrix of the signless Laplacian matrix $Q(G)$ 
of the graph $G$, where rows and columns are restricted to $V(H)$.
In this case, we denote the $Q$-matrix of $H$ by $ Q(G)|_{V(H)}$.
Definitions of $Q$-graph and $Q$-matrix are first introduced by Park and Sano \cite{PS19}.


Proposition~3.11 of \cite{PS19} is the original version of Proposition~\ref{prop:ev}, which includes the bipartite case. 
However, in this paper, we investigate non-bipartite graphs with larger minimum eigenvalues compared to bipartite graphs.
This proposition is frequently used in this paper.

\begin{prop}[{\cite[Proposition~2.3]{OPPS}}, {\cite[Proposition 3.11]{PS19}} and {\cite[Proposition~3.7]{PS23}}]\label{prop:ev}
  Let $G$ be a connected non-bipartite $Q$-integral graph with $Q$-spectral radius $\rho$.
  If $H$ is a connected induced $Q$-subgraph of $G$,
  then the following hold: 
  \begin{itemize}
  \item[{\rm (i)}]
    The largest eigenvalue of the $Q$-matrix of $H$ is at most $\rho$, 
    and 
    is equal to $\rho$ if and only if $G = H$; 
  \item[{\rm (ii)}]
    The second largest eigenvalue of the $Q$-matrix of $H$ is at most $\rho-1$; 
  \item[{\rm (iii)}]
    The smallest eigenvalue of the $Q$-matrix of $H$ is at least $1$.
  \end{itemize}
\end{prop}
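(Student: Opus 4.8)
The plan is to deduce all three items from the Perron--Frobenius Theorem together with the Interlacing Theorem, exploiting the hypothesis that $G$ is non-bipartite. Throughout, write $Q = Q(G)$ and let $M = Q(G)|_{V(H)}$ be the $Q$-matrix of the connected induced $Q$-subgraph $H$; since $G$ is connected, both $Q$ and $M$ are nonnegative irreducible matrices, and $M$ is a principal submatrix of $Q$.

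For item (i), I would first note that $\rho$ is the Perron eigenvalue of $Q$. By the Interlacing Theorem the largest eigenvalue of the principal submatrix $M$ is at most $\rho$. For the equality case: if $H = G$ there is nothing to prove, so suppose $H$ is a proper induced subgraph. Since $Q$ is irreducible and nonnegative, the Perron--Frobenius Theorem (in the form \cite[Theorem 8.8.1]{god}) gives that any proper principal submatrix of $Q$ has spectral radius strictly less than $\rho$; this is exactly the statement that deleting a vertex strictly decreases the Perron value of an irreducible nonnegative matrix. Hence the largest eigenvalue of $M$ equals $\rho$ if and only if $H = G$.

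For item (iii), the key input is non-bipartiteness. Because $G$ is non-bipartite and connected, $Q(G)$ is positive definite — equivalently, the smallest $Q$-eigenvalue of $G$ is positive — and since $G$ is $Q$-integral this smallest eigenvalue is an integer, hence at least $1$. Now $M$ is a principal submatrix of the positive semidefinite (indeed positive definite) matrix $Q - I$ only after we argue the bound carefully: the cleanest route is to observe that $Q(G) - I_n$ is positive semidefinite (its eigenvalues are the $Q$-eigenvalues minus $1$, all $\geq 0$), so every principal submatrix of $Q(G) - I_n$ is positive semidefinite, in particular $M - I_{|V(H)|} \succeq 0$, which gives that every eigenvalue of $M$ is at least $1$. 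Item (ii) follows from interlacing applied to the second largest eigenvalue: by the Interlacing Theorem the second largest eigenvalue of $M$ is at most the second largest eigenvalue of $Q(G)$, and because $G$ is $Q$-integral with largest eigenvalue $\rho$, the second largest $Q$-eigenvalue of $G$ is an integer strictly smaller than $\rho$ (the Perron eigenvalue of the irreducible matrix $Q(G)$ is simple), hence at most $\rho - 1$.

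The main obstacle is item (iii): one must be sure that a connected non-bipartite graph has positive definite signless Laplacian, and that this is what allows the sharp integer bound $1$ rather than the trivial bound $0$. This is where the hypothesis that $G$ is non-bipartite is essential — in the bipartite case $0$ is a $Q$-eigenvalue and the conclusion weakens accordingly, which is precisely why the original Proposition~3.11 of \cite{PS19} is phrased differently. Everything else is a direct application of Perron--Frobenius (simplicity and strict monotonicity of the Perron value under passing to proper principal submatrices of an irreducible nonnegative matrix) and Cauchy interlacing; no computation is required.
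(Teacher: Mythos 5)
Your proof is correct and uses exactly the tools the paper itself names for this purpose (the Perron--Frobenius Theorem for the strict inequality in the equality case of (i), and the Interlacing Theorem for the eigenvalue bounds), combined with the standard fact that a connected non-bipartite graph has positive definite signless Laplacian; the paper does not reprove the proposition but imports it from the cited references, and your argument is the expected one behind them. The only cosmetic point is that item (iii) also follows in one line from Cauchy interlacing applied to the smallest eigenvalue (the least eigenvalue of a principal submatrix is at least the least eigenvalue of the full matrix, which is a positive integer by $Q$-integrality and non-bipartiteness), so the detour through positive semidefiniteness of $Q-I$ is not needed, though it is equally valid.
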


If the $Q$-spectral radius of graphs is bounded, then both the maximum vertex degree and the maximum edge-degree of the graph are also bounded. 
The result is as follows.


\begin{prop}[{\cite[Proposition 2.7-8.]{PS19}}]\label{prop:verdeg}
Let $G$ be a connected non-bipartite $Q$-integral graph with $Q$-spectral radius $\rho$. 
Then the maximum vertex degree of $G$ is at most $\rho-2$, 
and the maximum edge-degree of $G$ is at most $2\rho - 6$.

\end{prop}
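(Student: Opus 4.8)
The plan is to bound the maximum vertex degree by comparing $Q(G)$ with the signless Laplacian matrix of a star, and then to read off the edge-degree bound as an immediate corollary.

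First I would fix a vertex $v$ of maximum degree $\Delta := \Delta(G)$ and list its neighbours $N(v) = \{u_1, \dots, u_\Delta\}$. Let $H$ be the connected induced $Q$-subgraph of $G$ on $\{v\} \cup N(v)$ and $M := Q(G)|_{V(H)}$ its $Q$-matrix, which is irreducible because $H$ is connected. Let $B$ be the $(\Delta+1)\times(\Delta+1)$ matrix on the index set $\{v\}\cup N(v)$ with $B_{vv}=\Delta$, $B_{u_iu_i}=1$, $B_{vu_i}=B_{u_iv}=1$, and all remaining entries $0$; this is exactly $Q(K_{1,\Delta})$ with centre $v$, so $\lambda_{\max}(B)=\Delta+1$. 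Comparing entrywise, $M_{vv}=\deg_G(v)=\Delta=B_{vv}$, $M_{u_iu_i}=\deg_G(u_i)\ge 1=B_{u_iu_i}$, $M_{vu_i}=1=B_{vu_i}$, and $M_{u_iu_j}\ge 0 = B_{u_iu_j}$, so $0\le B\le M$ with both matrices nonnegative and irreducible. The Perron--Frobenius Theorem~\cite[Theorem~8.8.1]{god} then gives $\lambda_{\max}(M)\ge\lambda_{\max}(B)=\Delta+1$, while Proposition~\ref{prop:ev}\,(i) gives $\lambda_{\max}(M)\le\rho$; hence $\rho\ge\Delta+1$.

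Next I would rule out equality. If $\rho=\Delta+1$ then $\lambda_{\max}(M)=\rho$, so Proposition~\ref{prop:ev}\,(i) forces $H=G$, i.e.\ $M=Q(G)$. Then $B\le Q(G)$ with $\lambda_{\max}(B)=\rho=\lambda_{\max}(Q(G))$ and both matrices irreducible, so by the strict monotonicity of the Perron root of an irreducible nonnegative matrix we get $B=Q(G)$, i.e.\ $G\cong K_{1,\Delta}$. But a star is bipartite, contradicting the hypothesis. Therefore $\rho>\Delta+1$, and since $G$ is $Q$-integral, $\rho$ is an integer, so $\rho\ge\Delta+2$; that is, $\Delta(G)\le\rho-2$. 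Finally, for any edge $xy\in E(G)$ the edge-degree of $xy$ equals $\deg_G(x)+\deg_G(y)-2\le 2\Delta(G)-2\le 2(\rho-2)-2=2\rho-6$, which is the desired bound on the maximum edge-degree.

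The only step that is not a one-line verification is the equality analysis: one needs the fact that for irreducible nonnegative matrices $0\le B\le C$ with $\lambda_{\max}(B)=\lambda_{\max}(C)$ one must have $B=C$ (a consequence of the positivity of the Perron eigenvector), together with the strict part of Proposition~\ref{prop:ev}\,(i), which says that a proper principal submatrix of $Q(G)$ has strictly smaller largest eigenvalue. I expect that to be the main (though still routine) obstacle; the rest is just the entrywise comparison with a star together with the integrality of $\rho$.
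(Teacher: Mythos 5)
Your proof is correct. Note that the paper itself gives no proof of this proposition --- it is imported verbatim from \cite[Propositions~2.7--2.8]{PS19} --- and your argument is essentially the standard one underlying that reference: the comparison of $Q(G)|_{\{v\}\cup N(v)}$ with $Q(K_{1,\Delta})$ yields the classical bound $\rho \ge \Delta+1$ with equality only for the star, after which non-bipartiteness rules out the star, $Q$-integrality upgrades the strict inequality to $\rho \ge \Delta+2$, and the edge-degree bound follows from $\deg_G(x)+\deg_G(y)-2 \le 2\Delta-2$. All the Perron--Frobenius steps you flag (monotonicity of the Perron root under entrywise domination, and strictness for irreducible matrices) are standard and correctly applied, so there is no gap.
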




For an arbitrary subgraph $H$ of $G$, let $G[V(H)]$ denote the subgraph of $G$ induced by $V(H)$. 
We set $H'=G[V(H)\cup S]$ as a $Q$-subgraph for some subset $S\subseteq V(G)\setminus V(H)$. Then the $Q$-matrix of $H'$ is 
\[
  Q(G)|_{V(H')} = 
  \begin{pmatrix}
    Q(G)|_{V(H)} & A_{H'}^T \\
    A_{H'} & Q_{H'}
  \end{pmatrix},
\]
where the block matrices $A_{H'}$, $A_{H'}^T$, and $Q_{H'}$ are as follows:
\begin{itemize}
  \item $A_{H'}$ is an $|S|\times |V(H)|$ matrix whose rows and columns are indexed by the elements of $S$ and the vertices of $H$ respectively, and the $(u,v)$-entry of $A_{H'}$ is $1$ whenever $u$ and $v$ are adjacent and $0$ otherwise,
  \item $A_{H'}^T$ is the transpose of $A_{H'}$, 
  \item $Q_{H'}$ is the $Q$-matrix of the $Q$-subgraph of $G$ induced by $S$.
\end{itemize}

We note that $a_{uv}$ and $d(w)$ are abbreviated as $a_{\cdot\cdot}$ and $d(\cdot)$, respectively, for appropriate vertices $u, v$ and $w$.

\section{Proof of Theorem~\ref{thm:fish}}\label{sec:main}

If a $Q$-integral graph has a $Q$-spectral radius at most $5$, then by Proposition~\ref{prop:verdeg}, the maximum edge-degree is at most $4$. 
All $Q$-integral graphs with a maximum edge-degree at most $4$ were classified in~\cite{SS08}. 
The complete list of all connected non-bipartite $Q$-integral graphs with a $Q$-spectral radius at most $5$ is provided in Table 2.

\begin{table}[!ht] 
\centering
\begin{tabular}{c|c|c|c}
     \toprule
     $Q$-spectral radius & $Q$-integral graphs  & graph in Figure~\ref{fig:main} & maximum edge-degree\\
     \midrule
     4 & $C_3$  & $G_1$ & 2 \\
     5 & $H_2$  & $G_2$ & 4 \\
     \bottomrule
\end{tabular}
\caption{Connected non-bipartite $Q$-integral graphs with $Q$-spectral radius at most $5$}
\label{tab:list}
\end{table}

For $Q$-integral graphs with $Q$-spectral radius $6$, there is a result as follows:

\begin{thm}[{\rm \cite{SS08,PS19,OPPS}}]\label{thm:PS19}
  Let $G$ be a connected $Q$-integral graph with $Q$-spectral radius $6$. 
  Then, one of the following holds:
  \begin{itemize}
  \item[\rm (i)] 
  $G$ is one of the $17$ graphs: 
  $K_{1,5}$, $K_{2,4}$, $S(K_5)$, $K_{1,3} \square K_2$, and the $13$ cubic integral graphs;
  \item[\rm (ii)] 
  $G$ is edge-non-regular with the maximum edge-degree $5$.
  \end{itemize}
  \end{thm}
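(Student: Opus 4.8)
The plan is to recast $Q$-integrality as a condition on the line graph $L_G$, bound the maximum edge-degree $t$ of $G$, and then treat the surviving possibilities $t\in\{4,5,6\}$ separately. Since $G$ is connected with at least one edge, $L_G$ is connected, and the identities $R^{T}R=A(L_G)+2I_{m}$ and $RR^{T}=Q(G)$ show that the nonzero eigenvalues of $A(L_G)+2I_{m}$ coincide with those of $Q(G)$; hence $\lambda_{\max}(A(L_G))=\rho(Q(G))-2=4$. As $L_G$ is a line graph, its least eigenvalue is at least $-2$, so $L_G$ is a connected integral graph with spectrum contained in $\{-2,-1,0,1,2,3,4\}$. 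Because $\Delta(L_G)=t$ and the spectral radius of a graph never exceeds its maximum degree, $t\ge4$. For an upper bound, let $v$ be a vertex of maximum degree $\Delta:=\Delta(G)$; a short computation shows that the Rayleigh quotient of the $Q$-matrix of the induced $Q$-subgraph on $\{v\}\cup N_G(v)$ at the vector equal to $\Delta$ on $v$ and to $1$ elsewhere is at least $\Delta+1$, so Proposition~\ref{prop:ev}(i) (or its bipartite analogue \cite[Proposition~3.11]{PS19}) gives $\Delta\le5$, with $\Delta=5$ forcing $G=K_{1,5}$, a graph already in list (i). We may therefore assume $\Delta\le4$, so that every edge $uv$ has edge-degree $\deg_G u+\deg_G v-2\le6$, i.e.\ $t\le6$.

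Next, if $G$ were edge-regular with $t\ge5$, then $L_G$ would be a connected $t$-regular graph, whence $\lambda_{\max}(A(L_G))=t\ge5$ by Perron--Frobenius, contradicting $\lambda_{\max}(A(L_G))=4$. Thus for $t\in\{5,6\}$ the graph $G$ is edge-non-regular; in the case $t=5$ this is exactly conclusion (ii), and there is nothing more to prove. If instead $t\le4$, then $G$ is a $Q$-integral graph of maximum edge-degree at most $4$, and the complete classification of such graphs in~\cite{SS08}, restricted to $\rho(Q)=6$, leaves precisely $K_{1,5}$, $K_{2,4}$, $S(K_5)$ and the $13$ connected cubic integral graphs, all in list (i).

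The remaining, and hardest, case is $t=6$ with $G$ edge-non-regular, where the goal is to show $G\cong K_{1,3}\square K_2$. Here the edge $uv$ of edge-degree $6$ has $\deg_G u=\deg_G v=4$, so in $L_G$ the vertex $uv$ has degree $6$ and its neighborhood is the union of the triangle $T_u$ of edges at $u$ other than $uv$, the triangle $T_v$ of edges at $v$, and exactly $k:=|N_G(u)\cap N_G(v)|$ edges joining $T_u$ to $T_v$. The induced subgraph of $L_G$ on $\{uv\}\cup T_u\cup T_v$ has $7$ vertices, $12+k$ edges and is not regular, so its average degree $\tfrac{2(12+k)}{7}$ forces $\lambda_{\max}>4$ whenever $k\ge2$; interlacing against $L_G$ then gives $k\le1$. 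Starting from this configuration one runs the iterative induced-$Q$-subgraph procedure described in the introduction — repeatedly adjoin one vertex of $G$ to the current $Q$-subgraph, enumerate the adjacency patterns compatible with $Q$-integrality and with parts (i)--(iii) of Proposition~\ref{prop:ev}, and discard the inconsistent ones — a finite search whose unique surviving output is $K_{1,3}\square K_2$. This bookkeeping, which is the main obstacle, is precisely the work carried out in~\cite{PS19} and~\cite{OPPS}. Collecting the cases yields the $17$ graphs of (i) together with the edge-non-regular family of (ii).
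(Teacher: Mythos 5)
The paper does not prove this statement at all: Theorem~\ref{thm:PS19} is imported verbatim from \cite{SS08}, \cite{PS19} and \cite{OPPS}, so there is no in-paper argument to compare yours against. Judged on its own terms, your outline gets the reductions right, and these parts check out: the identities in~(\ref{eq:R}) do give $\lambda_{\max}(A(L_G))=\rho(Q(G))-2=4$, hence $4\le t\le 6$ for the maximum edge-degree $t$ (using $\rho(Q)\ge\Delta+1$ with equality only for stars, which correctly isolates $K_{1,5}$); the Perron--Frobenius argument correctly shows that $t\in\{5,6\}$ forces edge-non-regularity; and the average-degree/interlacing computation on the $7$-vertex neighborhood of a degree-$6$ vertex of $L_G$ correctly yields $|N_G(u)\cap N_G(v)|\le 1$ in the $t=6$ case.

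The genuine gap is that the two steps carrying all the content are not proved but re-cited. For $t\le 4$ you invoke the classification of \cite{SS08} and assert that its $\rho(Q)=6$ members are exactly $K_{1,5}$, $K_{2,4}$, $S(K_5)$ and the $13$ cubic integral graphs (this at least reduces cleanly: $\lambda_{\max}(A(L_G))=4=\Delta(L_G)$ forces $L_G$ to be $4$-regular, i.e.\ $G$ edge-regular of edge-degree $4$, so $G$ is cubic or bipartite semiregular of degrees $(5,1)$ or $(4,2)$, but the enumeration of the $(4,2)$ case is still taken on faith). More seriously, for $t=6$ the conclusion that the iterative induced-$Q$-subgraph search terminates with $K_{1,3}\square K_2$ as the unique survivor is simply asserted and attributed to \cite{PS19} and \cite{OPPS}; that finite case analysis is the actual proof of the hardest part of the theorem, and nothing in your write-up replaces it. Since the paper itself treats the whole theorem as a black box, deferring these steps is defensible as exposition, but your proposal should be read as a correct reduction scheme plus citations, not as an independent proof.
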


We consider the case (ii) of Theorem~\ref{thm:PS19} under the assumption that $G$ is non-bipartite.


Let $G$ be a connected non-bipartite $Q$-integral graph with $Q$-spectral radius $6$ and maximum edge-degree $5$.
Let $xy$ be an edge of edge-degree $5$ in $G$. 
By Proposition~\ref{prop:verdeg},
the maximum vertex degree of $G$ is at most $4$.
Without loss of generality, 
we may assume that $\text{deg}_G(x)=4$ and $\text{deg}_G(y)=3$.
Therefore, $x$ and $y$ have at most two common neighbors.
We use these two vertices in the whole proof of Theorem~\ref{thm:fish}.

To prove Theorem~\ref{thm:fish}, we use the following method.
At first, consider all possible $Q$-subgraphs $H$ of $G$ induced by $x$, $y$, and their neighbors, and check if any $H$ satisfies Proposition~\ref{prop:ev}.
If no such possible $Q$-subgraphs $H$ satisfies Proposition~\ref{prop:ev}, then we conclude that there is no $Q$-integral graph with $Q$-spectral radius $6$ and maximum edge-degree $5$ containing $H$ as an induced $Q$-subgraph.
If there exists a $Q$-subgraph $H$ satisfying Proposition~\ref{prop:ev}, we check whether $H$ is a $Q$-integral graph with $Q$-spectral radius $6$. 
In this case, if the $Q$-subgraph $H$ is not a $Q$-integral graph with $Q$-spectral radius $6$, then we create all possible connected induced $Q$-subgraphs $\tilde{H}$ of $G$ by adding a vertex to $H$. 
Repeat the above process starting from $\tilde{H}$.


\subsection{The case where $x$ and $y$ have two common neighbors} \label{sec:2common}

In this subsection, 
we will show that $x$ and $y$ have no two common neighbors.

\begin{lem}\label{2common}
  Let $G$ be a connected non-bipartite $Q$-integral graph with $Q$-spectral radius $6$ and maximum edge-degree $5$. 
  Then, $x$ and $y$ have no two common neighbors.
\end{lem}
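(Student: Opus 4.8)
The plan is to argue by contradiction. Suppose $x$ and $y$ have two common neighbors, say $z_1$ and $z_2$. Since $\deg_G(x)=4$ and $\deg_G(y)=3$, the vertex $y$ has exactly one further neighbor outside $\{x,z_1,z_2\}$, call it $w_y$, and $x$ has two further neighbors outside $\{y,z_1,z_2\}$, say $w_1$ and $w_2$ (these could coincide with $w_y$ or with one another only as permitted by simplicity; I would first enumerate the few coincidence patterns among $\{z_1,z_2\}$, i.e.\ whether $z_1z_2\in E(G)$, and whether $w_y\in\{z_1,z_2\}$ — but $w_y\neq z_i$ since $z_i$ is already counted as a neighbor of $y$, so $w_y$ is genuinely new). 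First I would fix the $Q$-subgraph $H$ of $G$ induced by $W=\{x,y,z_1,z_2\}$ and write down its $Q$-matrix $Q(G)|_{W}$: the diagonal entries are $(4,3,d(z_1),d(z_2))$ with each $d(z_i)\ge 2$ and $d(z_i)\le \rho-2=4$ by Proposition~\ref{prop:verdeg}, and the off-diagonal pattern is the $K_4$ minus the edge $z_1z_2$ if $z_1\not\sim z_2$, or full $K_4$ if $z_1\sim z_2$ (in the latter case the degrees increase accordingly).

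The key step is then to apply Proposition~\ref{prop:ev} to $H$ (and, if $H$ passes, to the finitely many one-vertex extensions $\tilde H$). Concretely, for each admissible diagonal tuple I would compute the characteristic polynomial of the $4\times 4$ matrix $Q(G)|_W$ and check the three necessary conditions: largest eigenvalue $\le 6$ with equality only if $H=G$, second-largest eigenvalue $\le 5$, and smallest eigenvalue $\ge 1$. Interlacing against the honest spectrum $\{6,\ldots\}$ of $G$ does most of the work here — having two common neighbors forces a fairly dense $4$-vertex induced piece, which tends to push the smallest eigenvalue of $Q(G)|_W$ below $1$ or the second eigenvalue above $5$. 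For the tuples that survive, I would grow the $Q$-subgraph by one vertex at a time (adding $w_y$, then $w_1,w_2$, etc., along with all consistent adjacency choices subject to $\deg\le 4$ and edge-degree $\le 5$), recomputing eigenvalues and re-testing Proposition~\ref{prop:ev} at each stage; this is exactly the iterative methodology described in Section~\ref{sec:main}. The branching is finite because all degrees are bounded by $4$, so after boundedly many extensions either every branch violates one of the interlacing inequalities — giving the desired contradiction — or a branch closes up into an actual $Q$-integral graph, which one then checks is bipartite (hence excluded) or already on the list with $\rho<6$.

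The main obstacle I expect is controlling the combinatorial explosion of extension branches in a human-readable way: even with degree bound $4$, adding $w_y,w_1,w_2$ and their neighbors produces many adjacency cases, and for each one must exhibit a $4$- or $5$-vertex induced $Q$-subgraph whose $Q$-matrix has smallest eigenvalue $<1$ or second eigenvalue $>5$. I would try to organize this by identifying a small number of "forbidden" induced $Q$-subgraph shapes (small dense configurations that always fail Proposition~\ref{prop:ev} regardless of the exact diagonal values in the allowed range $[2,4]$), proving each is forbidden once via an eigenvalue estimate, and then showing every branch arising from "$x,y$ have two common neighbors" must contain one of them. The eigenvalue estimates themselves are routine $4\times4$ or $5\times5$ determinant computations, so the real content is the bookkeeping that no case escapes; the computer-assisted check mentioned in the introduction is presumably what certifies the completeness of this case analysis.
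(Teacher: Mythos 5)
Your overall strategy --- assume two common neighbors, form the induced $Q$-subgraph, test it against Proposition~\ref{prop:ev}, and grow by one vertex at a time until every branch dies --- is exactly the methodology the paper uses. But as written the proposal is a plan rather than a proof: no characteristic polynomial is actually computed, no configuration is actually eliminated, and the decisive case analysis is deferred to ``bookkeeping'' and an unspecified computer check. The hope that the $4$-vertex set $\{x,y,z_1,z_2\}$ is already dense enough to violate interlacing does not hold in general: for instance with $z_1\not\sim z_2$ and $(d(z_1),d(z_2))=(2,3)$ the $4\times 4$ matrix
\[
\begin{pmatrix} 4 & 1 & 1 & 1 \\ 1 & 3 & 1 & 1 \\ 1 & 1 & 2 & 0 \\ 1 & 1 & 0 & 3 \end{pmatrix}
\]
has smallest eigenvalue at least $1$, exactly one eigenvalue above $5$, and largest eigenvalue below $6$, so it passes all three conditions of Proposition~\ref{prop:ev} and gives no contradiction. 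So the extension phase is not an optional tidying-up step; it is where the entire content of the lemma lives, and it is absent.

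The paper avoids most of this branching by a better choice of starting configuration: it includes from the outset a neighbor $x_0$ of $x$ that is not adjacent to $y$ (which exists because $\deg_G(x)=4$ and only two of $x$'s neighbors are shared with $y$), giving a $5$-vertex $Q$-subgraph on $\{x,y,x_0,y_0,y_1\}$. Proposition~\ref{prop:ev} applied to that single matrix already forces $x_0,y_0,y_1$ to be pairwise non-adjacent, $d(x_0)\in\{2,3\}$, and $\{d(y_0),d(y_1)\}=\{2,3\}$; then one further vertex $z$ adjacent to the degree-$3$ common neighbor suffices to rule out every remaining assignment (each either drops the smallest eigenvalue below $1$ or pushes the largest above $6$). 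If you want a human-readable argument, adopt that $5$-vertex starting set and exhibit the two eigenvalue checks explicitly; your ``forbidden small configurations'' idea is reasonable, but until those configurations are named and their eigenvalue estimates proved, the lemma is not established.
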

\begin{proof}
  Assume that $x$ and $y$ have two common neighbors, $y_0$ and $y_1$.
  Since the vertex $x$ has degree $4$,
  there exists a vertex $x_0$ that is adjacent to $x$
  but not adjacent to $y$ (see Figure~\ref{fig:2common}).
  
  \begin{figure}[ht]
    \centering
    \begin{tikzpicture}[auto,node distance=2cm,semithick,scale=0.3]
      \tikzstyle{vertex}=[circle,fill,inner sep=1.5pt]
      \node[vertex] (x0) at (-6, 0)  {};
      \node[vertex] (x)  at (-2, 0) {};
      \node[vertex] (y)  at (2, 0) {};
  
      \node[vertex] (x1) at (0,3) {};
      \node[vertex] (x2) at (0,-3) {};
  
      \node[vertex] (y1) at (0,3) {};
      \node[vertex] (y0) at (0, -3) {};
      
      \path[-]
      (x) edge (y)
  
      (x) edge (x0)
      (x) edge (x1)
      (x) edge (x2)
      
      (y) edge (y0)
      (y) edge (y1);

  
      \draw
      (x) node[below, yshift=-3, scale=0.8] {$x$}
      (y) node[right, scale=0.8] {$y$}
  
      (x0) node[left, scale=0.8] {$x_0$}
    
      (y0) node[below, scale=0.8] {$y_0$}
      (y1) node[above, scale=0.8] {$y_1$};
    \end{tikzpicture}
    \caption{$x$ and $y$ have two common neighbors $y_0$ and $y_1$.}
    \label{fig:2common}
  \end{figure}
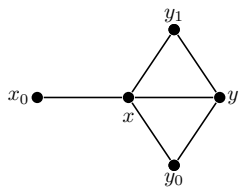 

  Let $H$ be the $Q$-subgraph induced by $\{x$, $y$, $x_0$, $y_0$, $y_1\}$.
  Note that all neighbors of $x$ cannot have degree $4$, if then the edge-degree of the edge exceed $5$.
  Then the $Q$-matrix of $H$ has the following form:
  \[
  Q(G)|_{V(H)} =
  \begin{pmatrix}
    4 & 1 & 1          & 1          & 1\\
    1 & 3 & 0          & 1          & 1\\
    1 & 0 & d(x_0)     & a_{x_0y_0} & a_{x_0y_1} \\
    1 & 1 & a_{y_0x_0} & d(y_0)     & a_{y_0y_1} \\ 
    1 & 1 & a_{y_1x_0} & a_{y_1y_0} & d(y_1)
  \end{pmatrix},
  \]
  where $d(x_0)\in\{1,2,3\}$, $d(\cdot)\in\{2,3\}$ and $a_{\cdot\cdot}\in\{0,1\}$.
  If $d(x_0)=1$ or $a_{uv}=1$ for some two vertices $u$ and $v$ in $\{x_0, y_0, y_1\}$, then the induced $Q$-matrix of $H$ dose not satisfy the Proposition~\ref{prop:ev}. 
  So, we have $d(x_0) \in \{2,3\}$ and $a_{\cdot\cdot}=1$.
  Also, if $d(y_0)=d(y_1)$, then it dose not satisfy the Proposition~\ref{prop:ev}. 
  Thus, we have $(d(y_0), d(y_1)) = (2,3)$ or $(3,2)$.
  By the symmetry, we may assume that $d(y_0)=2$ and $d(y_1)=3$.
  Then there is a vertex $z\in V(G)\setminus V(H)$ such that $z$ is adjacent to $y_1$. 
  We set $S=\{z\}$ and $H'=G[V(H)\cup S]$.
  Then the $Q$-matrix of $H'$ is 
  \[
  Q(G)|_{V(H')} =
  \begin{pmatrix}
    Q(G)|_{V(H)} & A_{H'}^T \\
    A_{H'}  & Q_{H'}
  \end{pmatrix}
  =
  \left(
  \begin{array}{ccccc|c}
    4 & 1 & 1        & 1 & 1 & 0\\
    1 & 3 & 0        & 1 & 1 & 0\\
    1 & 0 & d(x_0)   & 0 & 0 & a_{x_0z}\\
    1 & 1 & 0        & 2 & 0 & 0\\ 
    1 & 1 & 0        & 0 & 3 & 1\\ \hline
    0 & 0 & a_{zx_0} & 0 & 1 & d(z)
  \end{array}
  \right),
  \]
  where $d(x_0)\in\{2,3\}$, $d(z)\in\{1,2,3,4\}$ and $a_{x_0z}\in\{0,1\}$.
  Every case except $d(x_0)=3$, $d(z)=4$, $a_{x_0z}=0$ has the smallest eigenvalue less than $1$.
  But the largest eigenvalue of $Q(H')$ in the case $d(x_0)=3$, $d(z)=4$, $a_{x_0z}=0$ is greater than $6$.
  By Interlacing Theorem, the induced subgraph $H$ of $G$ cannot exist, i.e., $x$ and $y$ have no two common neighbors. This completes the proof.
\end{proof}

\subsection{The case where $x$ and $y$ have one common neighbor}\label{sec:1common}
We assume that $x$ and $y$ have exactly one common neighbor.
In that case, $x$, $y$, and their common neighbor form a triangle.
We define the graph $T_{3,2}$ as follows (see Figure~\ref{fig:T32}):
\begin{eqnarray*} 
  V(T_{3,2}) &=& \{x_0, x_1, x, y, y_0, y_1\}, \\
  E(T_{3,2}) &=& \{x_0x, x_1x, y_1x, xy, yy_0, yy_1\}. 
\end{eqnarray*}


\begin{figure}[ht]
  \centering
  \begin{tikzpicture}[auto,node distance=2cm,semithick,scale=0.3]
    \tikzstyle{vertex}=[circle,fill,inner sep=1.5pt]
    \node[vertex] (x0) at (-6, 0)  {};
    \node[vertex] (x)  at (-2, 0) {};
    \node[vertex] (y)  at (2, 0) {};

    \node[vertex] (x2) at (0,3) {};
    \node[vertex] (x1) at (-2,-4) {};

    \node[vertex] (y1) at (0, 3) {};
    \node[vertex] (y0) at (6, 0) {};
    
    \path[-]
    (x) edge (y)

    (x) edge (x0)
    (x) edge (x1)
    (x) edge (x2)
    
    (y) edge (y0)
    (y) edge (y1);


    \draw
    (x) node[right, yshift=-6, scale=0.8] {$x$}
    (y) node[left, yshift=-6, scale=0.8] {$y$}

    (x0) node[left, scale=0.8] {$x_0$}
    (x1) node[left, scale=0.8] {$x_1$}

    (y0) node[right, scale=0.8] {$y_0$}
    (y1) node[above, scale=0.8] {$y_1$};
  \end{tikzpicture}
  \caption{$T_{3,2}$}
  \label{fig:T32}
\end{figure}

\begin{lem} \label{lem:t32_2edge}
  Let $G$ be a connected non-bipartite $Q$-integral graph 
  with $Q$-spectral radius $6$ and maximum edge-degree $5$.
  Suppose that $G$ contains $T_{3,2}$ as a subgraph. 
  Then $|E(G[V(T_{3,2})])\setminus E(T_{3,2})|\leq2$. 
  Moreover, if $|E(G[V(T_{3,2})])\setminus E(T_{3,2})|=2$, then $G$ is isomorphic to the striped fish graph in Figure~\ref{fig:fish}.
\end{lem}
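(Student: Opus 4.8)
The plan is to first pin down, from the degree bounds alone, which edges of $G$ can lie inside $V(T_{3,2})$, and then to test the resulting $Q$-subgraphs against Proposition~\ref{prop:ev}. Since the given copy of $T_{3,2}$ forces $\deg_G(x)\ge 4$ while Proposition~\ref{prop:verdeg} gives $\deg_G(x)\le 6-2=4$, we have $\deg_G(x)=4$ and $N_G(x)=\{x_0,x_1,y_1,y\}$. The edge $xy$ then has edge-degree $\deg_G(x)+\deg_G(y)-2=2+\deg_G(y)$, which must be at most $5$, so $\deg_G(y)=3$ and $N_G(y)=\{x,y_1,y_0\}$. Hence $V(T_{3,2})=N_G(x)\cup N_G(y)\cup\{x,y\}$: no vertex outside $V(T_{3,2})$ is adjacent to $x$ or to $y$, and every edge counted by $E(G[V(T_{3,2})])\setminus E(T_{3,2})$ joins two vertices of $\{x_0,x_1,y_0,y_1\}$. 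There are six such candidate edges, $x_0x_1$, $x_0y_0$, $x_0y_1$, $x_1y_0$, $x_1y_1$, $y_0y_1$; applying the edge-degree bound to $xx_0$, $xx_1$, $xy_1$ gives $\deg_G(x_0),\deg_G(x_1),\deg_G(y_1)\le 3$ (so at most one of $x_0y_1,x_1y_1,y_0y_1$ is present, and at most two candidates meet $x_0$, at most two meet $x_1$), and applying it to $yy_0$ gives $\deg_G(y_0)\le 4$.

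For each admissible set $F$ of these extra edges — there are only finitely many, and the automorphism $x_0\leftrightarrow x_1$ of $T_{3,2}$ roughly halves the number to examine — I would write down the $Q$-matrix $Q(G)|_{V(H)}$ of the induced $Q$-subgraph $H:=G[V(T_{3,2})]$, the unknowns being the off-diagonal entries $a_{\cdot\cdot}$ (fixed by $F$) and the diagonal entries $d(\cdot)$ (subject to $d(v)\ge\deg_H(v)$ and the upper bounds just derived), and check the three conditions of Proposition~\ref{prop:ev}: smallest eigenvalue $\ge 1$, second largest $\le 5$, largest $\le 6$. I expect that whenever $|F|\ge 3$ every admissible choice already fails at this stage — typically the smallest-eigenvalue condition, since adjoining three or more edges to $\{x_0,x_1,y_0,y_1\}$ forces too many short cycles — which proves the first assertion. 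For $|F|\le 2$, the configurations that survive the $H$-level test but for which $H$ is not itself $Q$-integral with $Q$-spectral radius $6$ are then extended one vertex at a time, exactly as in the iterative scheme of Section~\ref{sec:main}, and each branch is carried until Proposition~\ref{prop:ev} is violated; the only branch that does not close off this way is the one with $F=\{x_0x_1,y_0y_1\}$.

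For the equality case, the preceding analysis leaves $F=\{x_0x_1,y_0y_1\}$ as the only possibility, so that $G[V(T_{3,2})]$ is, as an abstract graph, the striped fish graph of Figure~\ref{fig:fish}. It remains to pin down the degrees and to rule out further vertices. Here $N_G(y_1)\supseteq\{x,y,y_0\}$ together with $\deg_G(y_1)\le 3$ forces $\deg_G(y_1)=3$, while $x_0$, $x_1$, $y_0$ have degree $\ge 2$ a priori. Taking the $Q$-matrix of $H$ with the minimal values $d(x_0)=d(x_1)=d(y_0)=2$, one computes its eigenvalues to be $6,4,2,2,1,1$, so its largest eigenvalue is exactly $6$. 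Since the $Q$-matrix of a connected graph is irreducible and nonnegative, the Perron--Frobenius Theorem gives a strictly positive eigenvector for its spectral radius, and, via the Rayleigh quotient, increasing any diagonal entry strictly increases that spectral radius; hence raising any of $d(x_0),d(x_1),d(y_0)$ above $2$ would push the largest eigenvalue of $Q(G)|_{V(H)}$ past $6$, contradicting Proposition~\ref{prop:ev}(i). Therefore $\deg_G(x_0)=\deg_G(x_1)=\deg_G(y_0)=2$, the $Q$-matrix of $H$ has largest eigenvalue $6=\rho$, and Proposition~\ref{prop:ev}(i) yields $G=H$; that is, $G$ is the striped fish graph.

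The step I expect to be the real obstacle is not any single inequality but the bookkeeping in the middle paragraph: a handful of sparse extra-edge patterns survive the first eigenvalue test, and for each of these the vertex-by-vertex extension branches into many subcases of adjacency and degree choices that all have to be driven to a contradiction with Proposition~\ref{prop:ev}. Making this search exhaustive and verifiable is precisely the role of the computer-assisted enumeration whose pseudocode is given in the Appendix; the conceptual content — localizing the extra edges to $\{x_0,x_1,y_0,y_1\}$, and then using Perron monotonicity together with Proposition~\ref{prop:ev}(i) to finish — is the light part.
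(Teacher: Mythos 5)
Your proposal is correct and follows essentially the same route as the paper: localize the possible extra edges to pairs from $\{x_0,x_1,y_0,y_1\}$ via the degree and edge-degree bounds, enumerate the finitely many edge patterns and diagonal values of $Q(G)|_{V(T_{3,2})}$ against Proposition~\ref{prop:ev}, and in the two-edge case invoke Proposition~\ref{prop:ev}(i) to conclude $G=G[V(T_{3,2})]$ is the striped fish graph. Your only genuine deviation is using Perron--Frobenius monotonicity of the spectral radius in the diagonal entries to fix $d(x_0)=d(x_1)=d(y_0)=2$ in the equality case, a clean shortcut where the paper simply enumerates the remaining degree choices.
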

\begin{proof}
  The $Q$-matrix of $T_{3,2}$ has the following form:
  \[
  Q(G)|_{V(T_{3,2})}=
  \begin{pmatrix}
    4 & 1 & 1          & 1          & 0          & 1         \\
    1 & 3 & 0          & 0          & 1          & 1         \\
    1 & 0 & d(x_0)     & a_{x_0x_1} & a_{x_0y_0} & a_{x_0y_1}\\
    1 & 0 & a_{x_1x_0} & d(x_1)     & a_{x_1y_0} & a_{x_1y_1}\\
    0 & 1 & a_{y_0x_0} & a_{y_0x_1} & d(y_0)     & a_{y_0y_1}\\
    1 & 1 & a_{y_1x_0} & a_{y_1x_1} & a_{y_1y_0} & d(y_1)
  \end{pmatrix},
  \]
  where $d(x_0),d(x_1)\in\{1,2,3\}$, $d(y_0)\in\{1,2,3,4\}$, $d(y_1)\in\{2,3\}$, and $a_{\cdot\cdot}\in\{0,1\}$.
  Applying Proposition~\ref{prop:ev} to $G[V(T_{3,2})]$, 
  we have $d(x_0),d(x_1),d(y_1)\in\{2,3\}$ and $d(y_0)\in\{2,3,4\}$.
  Also, we have $(1,0,0,0,0,1)$, $(1,0,0,0,0,0)$, $(0,1,0,0,0,0)$, $(0,0,0,1,0,0)$ and $(0,0,0,0,0,0)$ for $(a_{x_0x_1}$,$a_{x_0y_0}$,$a_{x_0y_1}$,$a_{x_1y_0}$,$a_{x_1y_1}$, $a_{y_0y_1})$. 
  This shows that $|E(G[V(T_{3,2})])\setminus E(T_{3,2})|\leq2$.

  Now, we consider the case $|E(G[V(T_{3,2})])\setminus E(T_{3,2})|=2$, i.e., $(a_{x_0x_1}$, $a_{x_0y_0}$, $a_{x_0y_1}$, $a_{x_1y_0}$, $a_{x_1y_1}$, $a_{y_0y_1})=(1,0,0,0,0,1)$.
  In this case, we have $d(x_0)=d(x_1)=d(y_0)=2$ and $d(y_1)=3$ and the spectrum is $\{6^1, 4^1, 2^2, 1^2\}$. 
  Hence, the graph $G$ is isomorphic to the striped fish graph in Figure~\ref{fig:fish}. 
  This completes the proof.
\end{proof}

\begin{lem} \label{lem:t32x1y0}
  Let $G$ be a connected non-bipartite $Q$-integral graph with $Q$-spectral radius $6$
  and maximum edge-degree $5$.
  Suppose that $G$ contains $T_{3,2}$ as a subgraph. 
  Then we have $E(G[V(T_{3,2})])\setminus E(T_{3,2}) \neq \{x_1y_0\}$.
\end{lem}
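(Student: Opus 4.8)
The plan is to assume, for contradiction, that $E(G[V(T_{3,2})])\setminus E(T_{3,2}) = \{x_1y_0\}$ and then derive a violation of Proposition~\ref{prop:ev} or of the Interlacing Theorem. First I would write down the $Q$-matrix of $Q(G)|_{V(T_{3,2})}$ with this choice of adjacencies, namely $a_{x_1y_0}=1$ and all other $a_{\cdot\cdot}=0$, and impose the degree constraints coming from Lemma~\ref{lem:t32_2edge} and Proposition~\ref{prop:ev}: $d(x_0),d(x_1),d(y_1)\in\{2,3\}$ and $d(y_0)\in\{2,3,4\}$. Since the $x_1y_0$ edge already contributes to the degrees of $x_1$ and $y_0$, I would check which combinations of these degrees still satisfy parts (i)--(iii) of Proposition~\ref{prop:ev} (largest eigenvalue $\le 6$, second largest $\le 5$, smallest $\ge 1$) for $Q(G)|_{V(T_{3,2})}$ itself. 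This should already cut the list of candidate degree-vectors down to a small number.

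Next, for each surviving degree assignment on $V(T_{3,2})$, I would identify a vertex of $V(T_{3,2})$ whose assigned degree strictly exceeds its degree inside $G[V(T_{3,2})]$ (such a vertex must exist unless every degree is already saturated, in which case $G = G[V(T_{3,2})]$ and one checks directly that this graph is not $Q$-integral with spectral radius $6$). Adjoining a new vertex $z$ outside $V(T_{3,2})$ adjacent to that vertex, I would form $H' = G[V(T_{3,2})\cup\{z\}]$, write its $6\times 6 \to 7\times 7$ $Q$-matrix in block form exactly as set up in Section~\ref{sec:pre}, and again run Proposition~\ref{prop:ev}: for each value of $d(z)\in\{1,2,3,4\}$ and each admissible pattern of further adjacencies from $z$ into $V(T_{3,2})$ (constrained by the maximum edge-degree $5$ and the already-fixed degrees), either the smallest eigenvalue drops below $1$ or the largest exceeds $6$. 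The expectation, mirroring the proof of Lemma~\ref{2common}, is that the only way to keep the smallest eigenvalue $\ge 1$ forces the degrees up high enough that the largest eigenvalue overshoots $6$, giving the contradiction via interlacing.

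The main obstacle will be bookkeeping: there are several candidate degree-vectors on $V(T_{3,2})$ and, for each, several choices of where $z$ attaches and what $d(z)$ is, so the case analysis could balloon. To keep it manageable I would use the symmetry $x_0 \leftrightarrow$ (itself), and more importantly exploit monotonicity — by the Perron--Frobenius/Interlacing machinery, raising a diagonal entry $d(\cdot)$ only increases the largest eigenvalue and (roughly) the smallest as well, so it suffices to check the extremal degree configurations rather than all of them. I would also lean on the observation already used in the preceding lemmas that $a_{uv}=1$ for non-adjacent-in-$T_{3,2}$ pairs, or $d(x_0)=1$, immediately kills Proposition~\ref{prop:ev}(iii), which prunes the tree of cases sharply. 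Concretely, I anticipate that after imposing Proposition~\ref{prop:ev} on $Q(G)|_{V(T_{3,2})}$ only one or two degree-vectors survive (likely something like $d(x_0)=d(x_1)=d(y_1)=2$, $d(y_0)\in\{2,3\}$), and then the single extension step to $H'$ finishes it, just as in Lemma~\ref{2common}.

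Finally I would record that, having ruled out this configuration, the proof is complete; if the routine eigenvalue computations are deferred to the computer-assisted verification described in the Appendix, I would cite that here rather than grinding through the characteristic polynomials by hand. The only genuinely delicate point to get right is ensuring that the extension vertex $z$ is chosen adjacent to a vertex that is genuinely under-saturated, so that $H'$ is a legitimate induced $Q$-subgraph of the putative $G$ and the Interlacing Theorem applies; everything else is a finite, mechanical check.
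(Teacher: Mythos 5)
Your overall strategy is the same as the paper's: assume $E(G[V(T_{3,2})])\setminus E(T_{3,2})=\{x_1y_0\}$, prune the diagonal entries and off-diagonal adjacencies of $Q(G)|_{V(T_{3,2})}$ via Proposition~\ref{prop:ev}, locate an under-saturated vertex (here $x_0$, since $\deg(x_0)=1$ inside the subgraph but $d(x_0)\geq 2$ is forced), adjoin a new neighbor, and re-test. That is exactly the paper's method, and your care about attaching $z$ only to a genuinely under-saturated vertex so that interlacing applies is the right scruple.

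There is, however, a genuine gap in the execution you sketch: you predict that ``the single extension step to $H'$ finishes it, just as in Lemma~\ref{2common},'' and it does not. After adding $z_1$ adjacent to $x_0$, Proposition~\ref{prop:ev} leaves two surviving attachment patterns, $(a_{z_1x_1},a_{z_1y_0},a_{z_1y_1})=(0,1,0)$ and $(0,0,0)$, and in both of them the $7\times 7$ matrix still satisfies all three conditions for suitable degrees --- in the first case with $(d(x_0),d(x_1),d(y_1),d(y_0),d(z_1))=(2,3,3,2,4)$, not the vector $d(x_0)=d(x_1)=d(y_1)=2$ you anticipate. So no contradiction is available at depth one. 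The paper must then add a second vertex $z_2$ (a neighbor of $x_1$ in the first case, of $x_0$ in the second), and in the second case even the resulting $8\times 8$ matrix survives, forcing a third vertex $z_3$ adjacent to $x_1$ before Proposition~\ref{prop:ev} finally fails on a $9\times 9$ matrix. Your plan as written terminates one to two levels too early and would leave those branches unresolved; to close them you must commit to iterating the extension step until every branch dies, rather than stopping at $H'$. A secondary caution: your monotonicity shortcut (checking only ``extremal'' degree configurations) needs care, because increasing a diagonal entry weakly increases \emph{all} eigenvalues, so the set of degree vectors passing condition (iii) is an up-set while the set passing condition (i) is a down-set; their intersection is not determined by the extreme configurations alone, so some non-extremal vectors (such as the one above) must still be examined.
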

\begin{proof}
  Assume that $E(G[V(T_{3,2})])\setminus E(T_{3,2}) = \{x_1y_0\}$. 
  We denote by $T_{3,2}'$ the graph obtained from $T_{3,2}$ by adding the edge $x_1y_0$ (see Figure~\ref{fig:t32x1y0_1}).
  
  \begin{figure}[!ht]
      \centering
      \begin{tikzpicture}[auto,node distance=2cm,semithick,scale=0.3]
        \tikzstyle{vertex}=[circle,fill,inner sep=1.5pt]
        \node[vertex] (x0) at (-6, 0)  {};
        \node[vertex] (x)  at (-2, 0) {};
        \node[vertex] (y)  at (2, 0) {};
    
        \node[vertex] (x2) at (0,3) {};
        \node[vertex] (x1) at (-2,-4) {};
    
        \node[vertex] (y1) at (0, 3) {};
        \node[vertex] (y0) at (6, 0) {};
        
        \path[-]
        (x) edge (y)
    
        (x) edge (x0)
        (x) edge (x1)
        (x) edge (x2)
        
        (y) edge (y0)
        (y) edge (y1)
        
        (x1) edge (y0);
    
        \draw
        (x) node[right, yshift=-6, scale=0.8] {$x$}
        (y) node[left, yshift=-6, scale=0.8] {$y$}
    
        (x0) node[left, scale=0.8] {$x_0$}
        (x1) node[left, scale=0.8] {$x_1$}
    
        (y0) node[right, scale=0.8] {$y_0$}
        (y1) node[above, scale=0.8] {$y_1$};
      \end{tikzpicture}
      \caption{$T_{3,2}'$}
      \label{fig:t32x1y0_1}
  \end{figure}
  If $d(x_0)=1$, then the largest eigenvalue is greater than $6$ or the smallest eigenvalue is less than $1$. 
  By Proposition~\ref{prop:ev}, we have $d(x_0)\in\{2,3\}$.
  Since $d(x_0)\geq2$, there exists $z_1$ in $V(G)\setminus V(T_{3,2}')$ such that 
  $z_1$ is adjacent to $x_0$. We set $S_1=\{z_1\}$ and $H'=G[V(T_{3,2}')\cup S_1]$.
  Then the $Q$-matrix of $H'$ has the following form:
  \[
  Q(G)|_{V(H')}=
  \begin{pmatrix}
    Q(G)|_{V(T_{3,2}')} & A_{H'}^T \\
    A_{H'}  & Q_{H'}
  \end{pmatrix},
  \]
  where
  \[
  Q(G)|_{V(T_{3,2}')}=
  \begin{pmatrix}
    4 & 1 & 1       & 1      & 0      & 1 \\
    1 & 3 & 0       & 0      & 1      & 1 \\
    1 & 0 & d(x_0)  & 0      & 0      & 0 \\
    1 & 0 & 0       & d(x_1) & 1      & 0 \\
    0 & 1 & 0       & 1      & d(y_0) & 0 \\
    1 & 1 & 0       & 0      & 0      & d(y_1)
  \end{pmatrix},
  \]
  \[
    A_{H'}  = 
    \begin{pmatrix}
      0 & 0 & 1 & a_{z_1x_1} & a_{z_1y_0} & a_{z_1y_1} \\ 
    \end{pmatrix}\textrm{~and~}
    Q_{H'} = 
    \begin{pmatrix}
      d(z_1) 
    \end{pmatrix}.
  \] 

  For any $d(\cdot)$ and $a_{\cdot\cdot}$, 
  applying Proposition~\ref{prop:ev} to $H'$, 
  we have $(a_{z_1x_1}$,$a_{z_1y_0}$,$a_{z_1y_1})=(0,1,0)$ or $(0,0,0)$ (see Figure~\ref{fig:t32x1y0_level2}).

  \begin{figure}[ht]
  \centering
  \begin{subfigure}[b]{0.45\textwidth}
    \centering
      \begin{tikzpicture}[auto,node distance=2cm,semithick,scale=0.2]
        \tikzstyle{vertex}=[circle,fill,inner sep=1.5pt]
        \node[vertex] (x0) at (-6, 0)  {};
        \node[vertex] (x)  at (-2, 0) {};
        \node[vertex] (y)  at (2, 0) {};
    
        \node[vertex] (x2) at (0,3) {};
        \node[vertex] (x1) at (-2,-4) {};
    
        \node[vertex] (y1) at (0, 3) {};
        \node[vertex] (y0) at (6, 0) {};
        
        \node[vertex] (z1) at (-6, -4) {};

        \path[-]
        (x) edge (y)
    
        (x) edge (x0)
        (x) edge (x1)
        (x) edge (x2)
        
        (y) edge (y0)
        (y) edge (y1)
        
        (x1) edge (y0)
        (x0) edge (z1)
        (z1) edge (y0);
    
        \draw
        (x) node[right, yshift=-6, scale=0.8] {$x$}
        (y) node[left, yshift=-6, scale=0.8] {$y$}
    
        (x0) node[left, scale=0.8] {$x_0$}
        (x1) node[left, scale=0.8] {$x_1$}
    
        (y0) node[right, scale=0.8] {$y_0$}
        (y1) node[above, scale=0.8] {$y_1$}

        (z1) node[left, scale=0.8] {$z_1$};
      \end{tikzpicture}
      \caption{$(a_{z_1x_1}$, $a_{z_1y_0}$, $a_{z_1y_1})=(0,1,0)$}
      \label{fig:t32x1y0_3}
  \end{subfigure}
  \hskip1cm
  \begin{subfigure}[b]{0.45\textwidth}
    \centering
      \begin{tikzpicture}[auto,node distance=2cm,semithick,scale=0.2]
        \tikzstyle{vertex}=[circle,fill,inner sep=1.5pt]
        \node[vertex] (x0) at (-6, 0)  {};
        \node[vertex] (x)  at (-2, 0) {};
        \node[vertex] (y)  at (2, 0) {};
    
        \node[vertex] (x2) at (0,3) {};
        \node[vertex] (x1) at (-2,-4) {};
    
        \node[vertex] (y1) at (0, 3) {};
        \node[vertex] (y0) at (6, 0) {};
        
        \node[vertex] (z1) at (-6, -4) {};

        \path[-]
        (x) edge (y)
    
        (x) edge (x0)
        (x) edge (x1)
        (x) edge (x2)
        
        (y) edge (y0)
        (y) edge (y1)
        
        (x1) edge (y0)
        (x0) edge (z1);
    
        \draw
        (x) node[right, yshift=-6, scale=0.8] {$x$}
        (y) node[left, yshift=-6, scale=0.8] {$y$}
    
        (x0) node[left, scale=0.8] {$x_0$}
        (x1) node[left, scale=0.8] {$x_1$}
    
        (y0) node[right, scale=0.8] {$y_0$}
        (y1) node[above, scale=0.8] {$y_1$}

        (z1) node[left, scale=0.8] {$z_1$};
      \end{tikzpicture}
      \caption{$(a_{z_1x_1}$, $a_{z_1y_0}$, $a_{z_1y_1})=(0,0,0)$}
      \label{fig:t32x1y0_2}
    \end{subfigure}
  \caption{$H'$ : $T_{3,2}'$ with $z_1$}
  \label{fig:t32x1y0_level2}
  \end{figure}
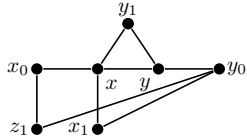
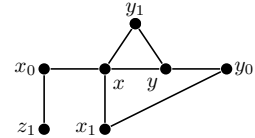

  If $(a_{z_1x_1}$,$a_{z_1y_0}$,$a_{z_1y_1})=(0,1,0)$,
  then $(d(x_0)$, $d(x_1)$, $d(y_1)$, $d(y_0)$, $d(z_1))=(2,3,3,2,4)$
  by Proposition~\ref{prop:ev}.
  Since $d(x_1)=3$, there exists $z_2$ in $V(G)\setminus V(H')$ such that $z_2$ is adjacent to $x_1$. We set $S_2=S_1\cup\{z_2\}$ and $H''=G[V(T_{3,2}')\cup S_2]$. Then, the $Q$-matrix of $H''$ does not satisfy Proposition~\ref{prop:ev}.
  
  If $(a_{z_1x_1}$,$a_{z_1y_0}$,$a_{z_1y_1})=(0,0,0)$,
  then $d(x_0)=3$ by Proposition~\ref{prop:ev}.
  Since $d(x_0)=3$, there exists $z_2$ in $V(G)\setminus V(H')$ such that $z_2$ is adjacent to $x_0$. We set $S_2=S_1\cup\{z_2\}$ and $H''=G[V(T_{3,2}')\cup S_2]$. Then the $Q$-matrix of $H''$ has the following form:
  \[
  Q(G)|_{V(H'')}=
  \begin{pmatrix}
    Q(G)|_{V(T_{3,2}')} & A_{H''}^T \\
    A_{H''}  & Q_{H''}
  \end{pmatrix},
  \]
  where
  \[
    (A_{H''}|Q_{H''})  = 
    \left(
    \begin{array}{cccccc|cc}
      0 & 0 & 1 & 0 & 0 & 0 & d(z_1)     & a_{z_1z_2} \\
      0 & 0 & 1 & a_{z_2x_1} & a_{z_2y_0} & a_{z_2y_1} & a_{z_2z_1} & d(z_2)
    \end{array}
    \right).
  \]
  For any $d(\cdot)$ and $a_{\cdot\cdot}$, 
  applying Proposition~\ref{prop:ev} to $H''$, we have 
  $(a_{z_2x_1}$,$a_{z_2y_0}$,$a_{z_2y_1})=(0,0,0)$ and
  $a_{z_2z_1}\in\{0,1\}$ (see Figure~\ref{fig:t32x1y0_level3}).
  
  \begin{figure}[ht]
  \centering
  \begin{subfigure}[b]{0.4\textwidth}
    \centering
      \begin{tikzpicture}[auto,node distance=2cm,semithick,scale=0.2]
        \tikzstyle{vertex}=[circle,fill,inner sep=1.5pt]
        \node[vertex] (x0) at (-6, 0)  {};
        \node[vertex] (x)  at (-2, 0) {};
        \node[vertex] (y)  at (2, 0) {};
    
        \node[vertex] (x2) at (0,3) {};
        \node[vertex] (x1) at (-2,-4) {};
    
        \node[vertex] (y1) at (0, 3) {};
        \node[vertex] (y0) at (6, 0) {};
        
        \node[vertex] (z1) at (-6, -4) {};
        \node[vertex] (z2) at (-10, 0) {};

        \path[-]
        (x) edge (y)
    
        (x) edge (x0)
        (x) edge (x1)
        (x) edge (x2)
        
        (y) edge (y0)
        (y) edge (y1)
        
        (x1) edge (y0)
        (x0) edge (z1)

        (z2) edge (x0)
        (z2) edge (z1);
    
        \draw
        (x) node[right, yshift=-6, scale=0.8] {$x$}
        (y) node[left, yshift=-6, scale=0.8] {$y$}
    
        (x0) node[above, scale=0.8] {$x_0$}
        (x1) node[left, scale=0.8] {$x_1$}
    
        (y0) node[right, scale=0.8] {$y_0$}
        (y1) node[above, scale=0.8] {$y_1$}

        (z1) node[left, scale=0.8] {$z_1$}
        (z2) node[left, scale=0.8] {$z_2$};
      \end{tikzpicture}
      \caption{$(a_{z_2x_1}$, $a_{z_2y_0}$, $a_{z_2y_1}$, $a_{z_2z_1})=$ $(0,0,0,1)$}
      \label{fig:t32x1y0_5}
  \end{subfigure}
  \hskip1.5cm
  \begin{subfigure}[b]{0.4\textwidth}
    \centering
      \begin{tikzpicture}[auto,node distance=2cm,semithick,scale=0.2]
        \tikzstyle{vertex}=[circle,fill,inner sep=1.5pt]
        \node[vertex] (x0) at (-6, 0)  {};
        \node[vertex] (x)  at (-2, 0) {};
        \node[vertex] (y)  at (2, 0) {};
    
        \node[vertex] (x2) at (0,3) {};
        \node[vertex] (x1) at (-2,-4) {};
    
        \node[vertex] (y1) at (0, 3) {};
        \node[vertex] (y0) at (6, 0) {};
        
        \node[vertex] (z1) at (-6, -4) {};
        \node[vertex] (z2) at (-10, 0) {};

        \path[-]
        (x) edge (y)
    
        (x) edge (x0)
        (x) edge (x1)
        (x) edge (x2)
        
        (y) edge (y0)
        (y) edge (y1)
        
        (x1) edge (y0)
        (x0) edge (z1)

        (z2) edge (x0);
    
        \draw
        (x) node[right, yshift=-6, scale=0.8] {$x$}
        (y) node[left, yshift=-6, scale=0.8] {$y$}
    
        (x0) node[above, scale=0.8] {$x_0$}
        (x1) node[left, scale=0.8] {$x_1$}
    
        (y0) node[right, scale=0.8] {$y_0$}
        (y1) node[above, scale=0.8] {$y_1$}

        (z1) node[left, scale=0.8] {$z_1$}
        (z2) node[left, scale=0.8] {$z_2$};
      \end{tikzpicture}
      \caption{$(a_{z_2x_1}$, $a_{z_2y_0}$, $a_{z_2y_1}$, $a_{z_2z_1})=(0,0,0,0)$}
      \label{fig:t32x1y0_4}
    \end{subfigure}
  \caption{$H''$ : $H'$ with $z_2$ when $(a_{z_1x_1}$,$a_{z_1y_0}$,$a_{z_1y_1})=(0,0,0)$.}
  \label{fig:t32x1y0_level3}
  \end{figure}
  
  For any $a_{z_2z_1}\in\{0,1\}$,
  applying Proposition~\ref{prop:ev} to $H''$, we have $d(x_1)=3$.
  Since $d(x_1)=3$, there exists $z_3$ in $V(G)\setminus V(H'')$ such that $z_3$ is adjacent to $x_1$. We set $S_3=S_2\cup\{z_3\}$ and $H'''=G[V(T_{3,2}')\cup S_3]$. Then, the $Q$-matrix of $H'''$ does not satisfy Proposition~\ref{prop:ev}.
\end{proof}

Note that our idea of proofs is simple 
because we start with a specific subgraph and extend the subgraph by adding vertices one by one.
At a certain moment, the extended graph becomes the whole graph satisfying all the conditions of Proposition~\ref{prop:ev}, or the extended graph does not satisfy at least one of the conditions of Proposition~\ref{prop:ev}. 
We implemented this iterative proof methodology using a computer to check whether a non-bipartite $Q$-integral graph containing a given induced subgraph exists, and the pseudocode is provided in the Appendix.
It can be used for general $Q$-spectral radius. 
Although the performance of the implementation will depend on the computational resources available and the programming language used, the classification method for $Q$-integral graphs using it has the potential to classify $Q$-integral graphs with $Q$-spectral radius larger than $6$.
It follows that we either find a non-bipartite $Q$-integral graph with a $Q$-spectral radius of 6 and a maximum edge-degree of 5, or we conclude that such a subgraph cannot exist as an induced subgraph in a non-bipartite $Q$-integral graph with a $Q$-spectral radius of 6 and a maximum edge-degree of 5. Although the idea is simple, we must check many possible cases. This makes the proofs lengthy if we explain all the details step by step. Therefore, in the rest of this paper, we omit the repeated proofs.

\begin{lem}\label{lem:t32not1}
  Let $G$ be a connected non-bipartite $Q$-integral graph with $Q$-spectral radius $6$ and maximum edge-degree $5$.
  Suppose that $G$ contains $T_{3,2}$ as a subgraph. 
  Then $|E(G[V(T_{3,2})])\setminus E(T_{3,2})|\neq 1$. 
\end{lem}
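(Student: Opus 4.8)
The plan is to show that the single extra edge $E(G[V(T_{3,2})])\setminus E(T_{3,2})$ cannot be any of the remaining possibilities left open by Lemma~\ref{lem:t32_2edge}. Recall that Lemma~\ref{lem:t32_2edge} proved that the admissible single-edge patterns for $(a_{x_0x_1},a_{x_0y_0},a_{x_0y_1},a_{x_1y_0},a_{x_1y_1},a_{y_0y_1})$ are $(1,0,0,0,0,0)$, $(0,1,0,0,0,0)$, $(0,0,0,1,0,0)$, and that Lemma~\ref{lem:t32x1y0} already disposes of $(0,0,0,1,0,0)$, i.e.\ the edge $x_1y_0$. So the remaining cases are the edge $x_0x_1$ (pattern $(1,0,0,0,0,0)$) and the edge $x_0y_0$ (pattern $(0,1,0,0,0,0)$). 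Note that by the symmetry $x_0 \leftrightarrow x_1$ among the two non-common neighbors of $x$, the edge $x_0y_0$ and the edge $x_1y_0$ play the same role, so in fact the case $x_0y_0$ is already covered by Lemma~\ref{lem:t32x1y0}; the only genuinely new case is the edge $x_0x_1$. I would state this symmetry reduction explicitly at the start of the proof.

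For the remaining case, where $E(G[V(T_{3,2})])\setminus E(T_{3,2}) = \{x_0x_1\}$, I would follow exactly the iterative extension methodology described after Lemma~\ref{lem:t32x1y0}. Let $T_{3,2}''$ be $T_{3,2}$ together with the edge $x_0x_1$; its $Q$-matrix is the $6\times 6$ matrix above with $a_{x_0x_1}=1$ and all other $a_{\cdot\cdot}=0$, and with $d(x_0),d(x_1),d(y_1)\in\{2,3\}$, $d(y_0)\in\{2,3,4\}$ as forced by Lemma~\ref{lem:t32_2edge}. First I would apply Proposition~\ref{prop:ev} to $G[V(T_{3,2}'')]$ to cut down the admissible degree vectors; in particular I expect the smallest-eigenvalue-$\ge 1$ condition to force each of $d(x_0),d(x_1)$ up and to pin down $d(y_0)$ and $d(y_1)$. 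Since $x_0$ (or $x_1$) then has degree $\ge 2$ inside a graph where only $x_0x_1$ and the edges to $x$ have been accounted for, there is a vertex $z_1\notin V(T_{3,2}'')$ adjacent to $x_0$; form $H'=G[V(T_{3,2}'')\cup\{z_1\}]$ and again apply Proposition~\ref{prop:ev} to restrict the adjacencies of $z_1$ to $\{x_1,y_0,y_1\}$ and the degree $d(z_1)$. Iterate: whenever a forced degree exceeds the number of already-placed neighbours, introduce a new vertex $z_i$, build the enlarged $Q$-subgraph, and test Proposition~\ref{prop:ev}. Each step either kills all surviving branches (largest eigenvalue $>6$ or smallest $<1$), or produces a connected $Q$-integral graph with $Q$-spectral radius $6$ — which, by inspection against the known list, would have to be the striped fish graph, but that graph contains no induced $T_{3,2}$ with the extra edge $x_0x_1$, giving a contradiction. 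I would present the first one or two levels of this tree in detail (as in Lemma~\ref{lem:t32x1y0}) and then invoke the computer-assisted check described in the Appendix for the rest, consistent with the paper's stated convention of omitting repeated routine case analysis.

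The main obstacle, as always in this style of argument, is the branching: the number of surviving degree-and-adjacency configurations can balloon before the eigenvalue constraints prune them, so the bookkeeping — rather than any single computation — is where care is needed. Concretely, the delicate point is to verify that at every leaf of the search tree at least one of the three conditions of Proposition~\ref{prop:ev} fails (largest eigenvalue $\le \rho$ with equality only for $G$ itself, second largest $\le \rho-1$, smallest $\ge 1$), and that the search tree is finite — which it is, because Proposition~\ref{prop:verdeg} caps all vertex degrees at $4$ and the $Q$-spectral-radius bound forces the graph to have boundedly many vertices. A secondary subtlety is making sure the symmetry reduction $x_0 \leftrightarrow x_1$ is legitimate at the point where it is invoked, i.e.\ that no earlier choice has broken it; since $x_0$ and $x_1$ enter $T_{3,2}$ completely symmetrically, this is clean. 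Once the $x_0x_1$ case is eliminated, combining with Lemma~\ref{lem:t32x1y0} and the symmetry yields $|E(G[V(T_{3,2})])\setminus E(T_{3,2})|\neq 1$, which is the claim.
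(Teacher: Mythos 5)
Your proposal is correct and takes essentially the same approach as the paper: the paper likewise reduces to the single-edge cases $x_0x_1$ and $x_0y_0$ and disposes of them by the same iterative vertex-by-vertex extension method of Lemma~\ref{lem:t32x1y0}, with no more detail than you give. Your extra observation that the $x_0y_0$ case is already covered by Lemma~\ref{lem:t32x1y0} via the $x_0\leftrightarrow x_1$ symmetry of $T_{3,2}$ is valid and slightly streamlines the paper's case (b), but it is not a different route.
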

\begin{proof}
  Suppose that $|E(G[V(T_{3,2})])\setminus E(T_{3,2})|=1$.
  By Proposition~\ref{prop:ev} and the symmetry, we have the following two cases:
  \begin{itemize}
  \item[(a)] $E(G[V(T_{3,2})])\setminus E(T_{3,2})=\{x_0x_1\}$;
  \item[(b)] $E(G[V(T_{3,2})])\setminus E(T_{3,2})=\{x_0y_0\}$.
  \end{itemize}

   By applying the method to cases (a) and (b) in the same way as in Lemma~\ref{lem:t32x1y0}, we can conclusively show that no $Q$-integral graph exists.
\end{proof}

\begin{lem}\label{lem:t32not0}
  Let $G$ be a connected non-bipartite $Q$-integral graph with $Q$-spectral radius $6$ and maximum edge-degree $5$.
  Suppose that $G$ contains $T_{3,2}$ as a subgraph. 
  Then $E(G[V(T_{3,2})])\setminus E(T_{3,2}) \not= \emptyset$. 
\end{lem}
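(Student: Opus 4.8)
The plan is to argue by contradiction. Suppose $T_{3,2}$ occurs as an \emph{induced} $Q$-subgraph of $G$; then in $Q(G)|_{V(T_{3,2})}$ every off-diagonal entry corresponding to a non-edge of $T_{3,2}$ vanishes, and only the diagonal degrees $d(x_0),d(x_1)\in\{1,2,3\}$, $d(y_0)\in\{1,2,3,4\}$, $d(y_1)\in\{2,3\}$ remain undetermined. First I would carry out the first-level test, applying Proposition~\ref{prop:ev} to this $6\times 6$ matrix. Condition~(iii) alone already forces $d(x_0),d(x_1),d(y_0)\ge 2$: if, say, $d(x_0)=1$, then the $2\times 2$ principal submatrix of $Q(G)|_{V(T_{3,2})}$ indexed by $\{x,x_0\}$, namely $\begin{pmatrix}4&1\\1&1\end{pmatrix}$, has smaller eigenvalue $(5-\sqrt{13})/2<1$, contradicting Proposition~\ref{prop:ev}(iii). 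Conditions~(i)--(ii) then eliminate several of the remaining finitely many tuples $(d(x_0),d(x_1),d(y_0),d(y_1))$.

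For each surviving tuple, each of $x_0,x_1,y_0$ has degree $1$ inside $T_{3,2}$ but $Q$-degree at least $2$, so there is a vertex $z_1\in V(G)\setminus V(T_{3,2})$ adjacent to one of them; set $H'=G[V(T_{3,2})\cup\{z_1\}]$. I would then iterate the procedure already used in Lemmas~\ref{2common}--\ref{lem:t32not1}: in the $Q$-matrix of the current $Q$-subgraph on $V(T_{3,2})\cup\{z_1,\dots,z_k\}$ the only unknowns are the adjacency vector of the newest vertex $z_k$ to the previously placed vertices together with its $Q$-degree $d(z_k)\in\{1,2,3,4\}$; Proposition~\ref{prop:ev} prunes the admissible choices, and Proposition~\ref{prop:verdeg} keeps all vertex degrees at most $4$ and all edge-degrees at most $5$. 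As long as the current $Q$-subgraph still contains a vertex whose $Q$-degree exceeds its realized degree, I adjoin another forced neighbour, which generates a finite search tree rooted at $T_{3,2}$.

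The point to verify is that every leaf of this tree is a $Q$-subgraph failing one of the three conditions of Proposition~\ref{prop:ev} --- typically the largest eigenvalue exceeds $6$ or the smallest drops below $1$ --- so that no extension of an induced $T_{3,2}$ is compatible with $G$ being a $Q$-integral graph of $Q$-spectral radius $6$; hence $E(G[V(T_{3,2})])\setminus E(T_{3,2})\ne\emptyset$. The main obstacle is entirely the size of this case analysis: the first-level branching on the four pendant-type degrees, followed by repeated branching on the adjacency pattern and $Q$-degree of each adjoined vertex, yields a large number of cases, and writing them all out in the style of the proof of Lemma~\ref{lem:t32x1y0} would be prohibitively long. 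This is precisely the setting for which the iterative method is designed, so I would spell out only the first-level reduction explicitly and leave the bounded, mechanical enumeration of the remaining branches to the computer-assisted procedure whose pseudocode is given in the Appendix.
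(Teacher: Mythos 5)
Your proposal is correct and follows essentially the same route as the paper: assume $T_{3,2}$ is induced, use Proposition~\ref{prop:ev}(iii) to force $d(x_0)\ge 2$ and hence an external neighbour $z_1$, branch on the admissible adjacency patterns of $z_1$, and dispose of the resulting finite tree of cases by the same iterative extension method used in Lemma~\ref{lem:t32x1y0}. The paper's own proof is no more detailed than yours — it likewise lists the four first-level cases for $z_1$ and then defers the remaining enumeration to the established method.
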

\begin{proof}
  Assume that $E(G[V(T_{3,2})])\setminus E(T_{3,2}) = \emptyset$. 
  By Proposition~\ref{prop:ev}, $d(x_0) \geq 2$. 
  Therefore, there is a vertex $z_1 \in V(G) \setminus V(T_{3,2})$ such that $z_1$ is adjacent to $x_0$.
  By Proposition~\ref{prop:ev}, we have only four cases: 
  \begin{itemize}
    \item[(a)] $z_1$ is adjacent to both $x_0$ and $y_1$;
    \item[(b)] $z_1$ is adjacent to both $x_0$ and $x_1$;
    \item[(c)] $z_1$ is adjacent to both $x_0$ and $y_0$;
    \item[(d)] $z_1$ is adjacent to only $x_0$.
  \end{itemize}
      By applying the method to cases (a) - (d) in the same way as in Lemma~\ref{lem:t32x1y0}, we can conclusively show that no $Q$-integral graph exists.
\end{proof}

\subsection{The case where $x$ and $y$ have no common neighbor} \label{sec:0common}
We assume that $x$ and $y$ have no common neighbors. Then we define the graph $S_{3,2}$ as follows (see Figure~\ref{fig:S32}):
\begin{eqnarray*} 
  V(S_{3,2}) &=& \{x_0, x_1, x_2, x, y, y_0, y_1\}, \\
  E(S_{3,2}) &=& \{x_0x, x_1x, x_2x, xy, yy_0, yy_1\}. 
\end{eqnarray*}

\begin{figure}[!ht]
  \centering
  \begin{tikzpicture}[auto,node distance=2cm,semithick,scale=0.3]
    \tikzstyle{vertex}=[circle,fill,inner sep=1.5pt]
    \node[vertex] (x0) at (-6, 0)  {};
    \node[vertex] (x)  at (-2, 0) {};
    \node[vertex] (y)  at (2, 0) {};

    \node[vertex] (x2) at (-2, 4) {};
    \node[vertex] (x1) at (-2,-4) {};

    \node[vertex] (y1) at (5, 3) {};
    \node[vertex] (y0) at (5, -3) {};
    
    \path[-]
    (x) edge (y)

    (x) edge (x0)
    (x) edge (x1)
    (x) edge (x2)
    
    (y) edge (y0)
    (y) edge (y1);


    \draw
    (x) node[right, yshift=-6, scale=0.8] {$x$}
    (y) node[left, yshift=-6, scale=0.8] {$y$}

    (x0) node[left, scale=0.8] {$x_0$}
    (x1) node[left, scale=0.8] {$x_1$}
    (x2) node[left, scale=0.8] {$x_2$}

    (y0) node[right, scale=0.8] {$y_0$}
    (y1) node[right, scale=0.8] {$y_1$};
  \end{tikzpicture}
  \caption{$S_{3,2}$}
  \label{fig:S32}
\end{figure}

\newpage

\begin{lem}\label{lem:s32disjoint}
  Let $G$ be a connected non-bipartite $Q$-integral graph with $Q$-spectral radius $6$ and maximum edge-degree $5$.
  Suppose that $G$ contains $S_{3,2}$ as a subgraph. 
  Then every two distinct edges in $E(G[V(S_{3,2})])\setminus E(S_{3,2})$ has no common vertex. 
\end{lem}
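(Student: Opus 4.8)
The plan is to argue by contradiction, mimicking the iterative‑extension strategy used in Lemma~\ref{lem:t32x1y0}. Suppose two distinct edges $e_1,e_2\in E(G[V(S_{3,2})])\setminus E(S_{3,2})$ share a common vertex $v$. Since the only vertices of $S_{3,2}$ that can possibly gain new incidences are the five ``leaves'' $x_0,x_1,x_2,y_0,y_1$ (the vertices $x$ and $y$ already have their full degree realized inside $S_{3,2}$), $v$ must be one of these leaves, and $e_1,e_2$ connect $v$ to two other leaves. First I would write down the $Q$-matrix $Q(G)|_{V(S_{3,2})}$ as a $7\times 7$ matrix with the diagonal entries $d(x_0),d(x_1),d(x_2)\in\{1,2,3\}$, $d(y_0),d(y_1)\in\{1,2,3,4\}$ (noting $\deg_G(x)=4$, $\deg_G(y)=3$), and the off‑diagonal entries $a_{\cdot\cdot}\in\{0,1\}$ among the leaf pairs, subject to the constraint that exactly the pattern containing the two adjacent edges $e_1,e_2$ is realized.

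Next I would apply Proposition~\ref{prop:ev} to $G[V(S_{3,2})]$ itself. Up to the symmetry of $S_{3,2}$ (permuting $x_0,x_1,x_2$ and swapping $y_0,y_1$), there are only a handful of cases for which pair of leaves $v$ lies in: either $v\in\{x_0,x_1,x_2\}$ with $e_1,e_2$ going to the other two $x_i$'s, or to one $x_i$ and one $y_j$, or $v\in\{y_0,y_1\}$ with $e_1,e_2$ going to two $x_i$'s (the case $e_1,e_2$ to $y_0$ and $y_1$ is impossible since $y_0,y_1$ are the only two $y$‑leaves, so a single vertex cannot be adjacent to both unless it is $x$ or $y$). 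For each surviving case, Proposition~\ref{prop:ev}(ii)--(iii) forces the diagonal degrees; I expect most configurations to be eliminated immediately because the smallest eigenvalue drops below $1$ or the second‑largest exceeds $5$. For those that survive, the forced degrees $d(\cdot)$ will be strictly larger than the degree already realized inside $G[V(S_{3,2})]$, so there is a new vertex $z_1\in V(G)\setminus V(S_{3,2})$ adjacent to one of the high‑degree leaves; I would then enlarge to $H'=G[V(S_{3,2})\cup\{z_1\}]$, check which attachment patterns of $z_1$ survive Proposition~\ref{prop:ev}, and iterate by adding $z_2,z_3,\dots$ exactly as in the proof of Lemma~\ref{lem:t32x1y0}, each time using the fact that a vertex with $d(\cdot)$ exceeding its current realized degree must acquire a further neighbor.

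The key observation that makes the iteration terminate is that every time we are forced to attach a new vertex, the induced $Q$-matrix either violates Proposition~\ref{prop:ev} outright (contradiction, so the assumed configuration of $e_1,e_2$ cannot occur), or it becomes a $Q$-integral graph with $Q$-spectral radius $6$ and maximum edge-degree $5$ which we can check directly — but such a graph would have to be the striped fish graph of Theorem~\ref{thm:fish}, and the striped fish graph does not contain $S_{3,2}$ as a subgraph (it has no vertex of degree $4$ adjacent to a vertex of degree $3$ with no common neighbor), another contradiction. So in every branch we reach a contradiction, proving the lemma.

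The main obstacle I anticipate is purely bookkeeping: the number of branches in the iterative extension, since the two adjacent extra edges already increase the degrees of several leaves and thereby spawn several new vertices $z_i$, each with its own set of admissible attachment patterns. This is exactly the sort of case explosion the authors refer to after Lemma~\ref{lem:t32x1y0}, and I would handle it by invoking the computer‑assisted version of the method described there (the pseudocode in the Appendix), presenting only the representative surviving branch in full and stating that the remaining ones are eliminated identically. The genuinely mathematical content — as opposed to the routine eigenvalue checks — is just the symmetry reduction to the few leaf‑pair cases and the remark that any terminal $Q$-integral graph reached would have to be the striped fish graph, which fails to contain $S_{3,2}$.
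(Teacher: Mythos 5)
Your overall strategy --- contradiction, reduction to the five leaves $x_0,x_1,x_2,y_0,y_1$ (since $x$ and $y$ already have full degree inside $S_{3,2}$), symmetry reduction, eigenvalue checks via Proposition~\ref{prop:ev} on the $7\times 7$ matrix, and one-vertex extensions in the style of Lemma~\ref{lem:t32x1y0} --- is exactly what the paper does. However, your case enumeration has two concrete defects. First, you omit the configuration in which the shared vertex $v$ lies in $\{y_0,y_1\}$ and the two extra edges go to one $x_i$ and to the \emph{other} $y_j$ (i.e.\ the pair $\{vy_j, vx_i\}$ with $a_{y_0y_1}=1$). This is not a degenerate case: it is precisely case~(b) of the paper's proof, one of only two configurations that survive the $7\times 7$ eigenvalue check and must be killed by adjoining a new vertex $z_1$ adjacent to $x_0$. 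Second, your parenthetical dismissal of the case where $v=x_i$ is joined to both $y_0$ and $y_1$ (``a single vertex cannot be adjacent to both unless it is $x$ or $y$'') is not a valid argument: nothing combinatorial prevents $x_0$ from being adjacent to both $y_0$ and $y_1$ (the edge $xx_0$ would then have edge-degree $5$, which is allowed). The paper eliminates this case, together with the case of two extra edges into $\{x_0,x_1,x_2\}$, by showing that the resulting $7\times 7$ $Q$-matrix already violates Proposition~\ref{prop:ev}; an eigenvalue computation is needed, not a counting argument.

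A smaller logical point: in your termination discussion you propose to rule out a branch that closes up into a $Q$-integral graph by appealing to Theorem~\ref{thm:fish}. That would be circular, since Theorem~\ref{thm:fish} is proved using this lemma (via Lemmas~\ref{lem:s32edgeleq2}--\ref{lem:s32not0}). The correct move, and the one the paper uses elsewhere (e.g.\ in Lemma~\ref{lem:s32not2}), is to check the terminal graph directly for $Q$-integrality; in the present lemma no such branch actually arises, as a single extension by $z_1$ already violates Proposition~\ref{prop:ev} in both surviving cases.
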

\begin{proof}
  The $Q$-matrix of $S_{3,2}$ has the following form:
  \[
  Q(G)|_{V(S_{3,2})}=
  \begin{pmatrix}
    4 & 1 & 1          & 1          & 1          & 0          & 0         \\
    1 & 3 & 0          & 0          & 0          & 1          & 1         \\
    1 & 0 & d(x_0)     & a_{x_0x_1} & a_{x_0x_2} & a_{x_0y_0} & a_{x_0y_1}\\
    1 & 0 & a_{x_1x_0} & d(x_1)     & a_{x_1x_2} & a_{x_1y_0} & a_{x_1y_1}\\
    1 & 0 & a_{x_2x_0} & a_{x_2x_1} & d(x_2)     & a_{x_2y_0} & a_{x_2y_1}\\
    0 & 1 & a_{y_0x_0} & a_{y_0x_1} & a_{y_0x_2} & d(y_0)     & a_{y_0y_1}\\
    0 & 1 & a_{y_1x_0} & a_{y_1x_1} & a_{y_1x_2} & a_{y_1y_0} & d(y_1)
  \end{pmatrix},
  \]
  where $d(x_0), d(x_1), d(x_2)\in\{1,2,3\}$ and $d(y_0), d(y_1)\in\{1,2,3,4\}$
  and $a_{\cdot\cdot}\in\{0,1\}$.
  Suppose that there exist two edges in $E(G[V(S_{3,2})])\setminus E(S_{3,2})$ sharing a vertex $v$.
  Note that two endvertices, not $v$, of the two edges are in $\{x_0$, $x_1$, $x_2$, $y_0$, $y_1\}$.
  If these two endvertices are in $\{x_0,x_1,x_2\}$ or $\{y_0,y_1\}$, 
  then the largest eigenvalue is greater than $6$ or the smallest eigenvalue is less than $1$. 
  Thus we may assume that each of these two endvertices is in $\{x_0,x_1,x_2\}$ and $\{y_0,y_1\}$, respectively.
  \begin{itemize}
    \item[(a)] $v\in\{x_0,x_1,x_2\}$. 
  \end{itemize} 
  By the symmetry, we may assume that $v=x_0$ and $v$ is adjacent to $x_1$.
  Then $d(x_2)\in\{2,3\}$ and $x_2$ is not adjacent to any vertex in $V(S_{3,2})\setminus\{x\}$ by Proposition~\ref{prop:ev}.
  Thus, there exists $z_1$ in $V(G)\setminus V(S_{3,2})$ such that $z_1$ is adjacent to $x_2$. Then the $Q$-matrix of $G[V(S_{3,2})\cup\{z_1\}]$ has the following form:

  {\footnotesize \[
      \left(
      \begin{array}{ccccccc|c}
    4 & 1 & 1          & 1          & 1        & 0          & 0          & 0        \\
    1 & 3 & 0          & 0          & 0        & 1          & 1          & 0        \\
    1 & 0 & d(x_0)     & 1          & 0        & a_{x_0y_0} & a_{x_0y_1} & a_{x_0z_1} \\
    1 & 0 & 1          & d(x_1)     & 0        & a_{x_1y_0} & a_{x_1y_1} & a_{x_0z_1} \\ 
    1 & 0 & 0          & 0          & d(x_2)   & 0          & 0          & 1          \\
    0 & 1 & a_{y_0x_0} & a_{y_0x_1} & 0        & d(y_0)     & a_{y_0y_1} & a_{y_0z_1} \\ 
    0 & 1 & a_{y_1x_0} & a_{y_1z_1} & 0        & a_{y_1y_0} & d(y_1)     & a_{y_1z_1} \\ \hline
    0 & 0 & a_{z_1x_0} & a_{z_1x_1} & 1        & a_{z_1y_0} & a_{z_1y_1} & d(z_1)        
      \end{array}
      \right),
  \]}where $a_{x_0y_0} + a_{x_0y_1}=1$.
  For every $d(\cdot)\in\{1,2,3,4\}$ and $a_{\cdot\cdot}\in\{0,1\}$, 
  the $Q$-matrix does not satisfy Proposition~\ref{prop:ev}.

  \begin{itemize}
    \item[(b)] $v\in\{y_0,y_1\}$. 
  \end{itemize} 
  By the symmetry, we may assume that $v$ is adjacent to $x_0$.
  Then $d(x_0)=3$ and $x_0$ is not adjacent to any vertex in $V(S_{3,2})\setminus\{x,v\}$ by Proposition~\ref{prop:ev}.
  Thus, there exists $z_1$ in $V(G)\setminus V(S_{3,2})$ such that $z_1$ is adjacent to $x_0$. Then the $Q$-matrix of $G[V(S_{3,2})\cup\{z_1\}]$ has the following form:
  {\footnotesize \[
      \left(
      \begin{array}{ccccccc|c}
        4 & 1 & 1          & 1          & 1          & 0          & 0          & 0        \\
        1 & 3 & 0          & 0          & 0          & 1          & 1          & 0        \\
        1 & 0 & 3          & 0          & 0          & a_{x_0y_0} & a_{x_0y_1} & 1         \\
        1 & 0 & 0          & d(x_1)     & a_{x_1x_2} & a_{x_1y_0} & a_{x_1y_1} & a_{x_0z_1} \\ 
        1 & 0 & 0          & a_{x_2x_1} & d(x_2)     & a_{x_2y_0} & a_{x_2y_1} & a_{x_2z_1} \\
        0 & 1 & a_{y_0x_0} & a_{y_0x_1} & a_{y_0x_2} & d(y_0)     & 1          & a_{y_0z_1} \\ 
        0 & 1 & a_{y_1x_0} & a_{y_1x_1} & a_{y_1x_2} & 1          & d(y_1)     & a_{y_1z_1} \\ \hline
        0 & 0 & 1          & a_{z_1x_1} & a_{z_1x_2} & a_{z_1y_0} & a_{z_1y_1} & d(z_1)           
      \end{array}
      \right),
  \]}where $a_{x_0y_0} + a_{x_0y_1}=1$.
  For every $d(\cdot)\in\{1,2,3,4\}$ and $a_{\cdot\cdot}\in\{0,1\}$, 
  the $Q$-matrix does not satisfy Proposition~\ref{prop:ev}.
\end{proof}

\begin{lem}\label{lem:s32edgeleq2} 
  Let $G$ be a connected non-bipartite $Q$-integral graph with $Q$-spectral radius $6$ and maximum edge-degree $5$.
  Suppose that $G$ contains $S_{3,2}$ as a subgraph. 
  Then $|E(G[V(S_{3,2})])\setminus E(S_{3,2})|\leq2$.
\end{lem}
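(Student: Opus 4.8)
The statement to prove is Lemma~\ref{lem:s32edgeleq2}: if $G$ is a connected non-bipartite $Q$-integral graph with $Q$-spectral radius $6$ and maximum edge-degree $5$ containing $S_{3,2}$ as a subgraph, then at most two edges lie in $E(G[V(S_{3,2})])\setminus E(S_{3,2})$. The approach is the same iterative case analysis used throughout Section~\ref{sec:main}: examine the $Q$-matrix $Q(G)|_{V(S_{3,2})}$ written out in Lemma~\ref{lem:s32disjoint}, and show that having three or more ``extra'' edges among the seven vertices forces a violation of Proposition~\ref{prop:ev} (either the largest eigenvalue exceeds $6$, the second-largest exceeds $5$, or the smallest drops below $1$), either already on $G[V(S_{3,2})]$ itself or after adjoining one more vertex forced by a degree-$3$ or degree-$4$ condition.

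\textbf{Key steps, in order.} First, invoke Lemma~\ref{lem:s32disjoint}: any two of the extra edges are vertex-disjoint. Since the candidate endpoints of extra edges lie in $\{x_0,x_1,x_2\}$ on one side and $\{y_0,y_1\}$ on the other (an extra edge with both endpoints inside $\{x_0,x_1,x_2\}$ or inside $\{y_0,y_1\}$ was already ruled out inside the proof of Lemma~\ref{lem:s32disjoint} by the largest/smallest eigenvalue bound), each extra edge is of the form $x_iy_j$. A set of three pairwise vertex-disjoint edges of this bipartite type between a $3$-set and a $2$-set is combinatorially impossible — three disjoint edges would need three distinct vertices in $\{y_0,y_1\}$. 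Hence the only way to have three extra edges is to relax the disjointness, which Lemma~\ref{lem:s32disjoint} forbids. This already gives $|E(G[V(S_{3,2})])\setminus E(S_{3,2})|\le 2$ once we confirm that no extra edge can have an endpoint outside the bipartite pattern. So the real content is: rule out an extra edge of the form $x_ix_j$ or $y_0y_1$ even as the \emph{only} extra edge beyond a disjoint pair — but that is not needed for the bound ``$\le 2$''; what is needed is just that three extra edges is impossible, and the disjointness lemma plus the pigeonhole on $\{y_0,y_1\}$ delivers this directly.

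\textbf{Writing it cleanly.} I would phrase the proof as: suppose for contradiction $|E(G[V(S_{3,2})])\setminus E(S_{3,2})|\ge 3$. By Lemma~\ref{lem:s32disjoint} these extra edges are pairwise vertex-disjoint, so they form a matching $M$ in $G[V(S_{3,2})]$ with $|M|\ge 3$ using only vertices of $\{x_0,x_1,x_2,y_0,y_1\}$ (the vertices $x,y$ already have their $S_{3,2}$-neighborhoods saturated to the point that any further incidence pushes their degree past $4$ or $3$, contradicting Proposition~\ref{prop:verdeg} — this handles why $x,y$ are not endpoints of extra edges). A matching of size $3$ needs $6$ vertices, but only $5$ are available, contradiction. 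Therefore $|E(G[V(S_{3,2})])\setminus E(S_{3,2})|\le 2$.

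\textbf{Main obstacle.} The only subtle point is the clean justification that no extra edge is incident to $x$ or $y$: $x$ already has degree $4$ within $S_{3,2}$, so by Proposition~\ref{prop:verdeg} (maximum vertex degree $\le \rho-2 = 4$) it can have no further neighbor, and $y$ has degree $3$ within $S_{3,2}$, so an extra edge at $y$ would make $\deg_G(y)=4$ — permissible by the degree bound, but it would create a common neighbor of $x$ and $y$ among $\{x_0,x_1,x_2\}$ or $\{y_0,y_1\}$, contradicting the standing hypothesis of Section~\ref{sec:0common} that $x$ and $y$ have no common neighbor (an extra edge $yx_i$ makes $x_i$ a common neighbor). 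Once this is nailed down, the counting argument via Lemma~\ref{lem:s32disjoint} is immediate and the lemma follows with essentially no matrix computation.
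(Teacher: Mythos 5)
Your final argument is correct and is essentially the paper's own proof: the paper likewise observes that no extra edge is incident to $x$ or $y$ and that Lemma~\ref{lem:s32disjoint} makes the extra edges pairwise vertex-disjoint, so they form a matching on the five vertices $\{x_0,x_1,x_2,y_0,y_1\}$ and hence number at most two. One small caveat: your intermediate claim that every extra edge must join $\{x_0,x_1,x_2\}$ to $\{y_0,y_1\}$ misreads Lemma~\ref{lem:s32disjoint} (which only excludes two \emph{intersecting} extra edges whose free endpoints lie in the same part; single edges such as $x_0x_1$ or $y_0y_1$ survive to Lemmas~\ref{lem:s32not2} and~\ref{lem:s32not1}), but you correctly discard that claim and your clean write-up does not depend on it.
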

\begin{proof}
  Every edge in $E(G[V(S_{3,2})])\setminus E(S_{3,2})$ is not incident to $x$ and $y$.
  By Lemma~\ref{lem:s32disjoint}, for any two edges in $|E(G[V(S_{3,2})])\setminus E(S_{3,2})|$, they do not share a vertex. Thus, $E(G[V(S_{3,2})])\setminus E(S_{3,2})$ has at most two edges.
\end{proof}

By Lemmas~\ref{lem:s32disjoint} and~\ref{lem:s32edgeleq2}, we consider the number of edges in $E(G[V(S_{3,2})])\setminus E(S_{3,2})$.

\begin{lem}\label{lem:s32not2}
  Let $G$ be a connected non-bipartite $Q$-integral graph with $Q$-spectral radius $6$ and maximum edge-degree $5$. Suppose that $G$ contains $S_{3,2}$ as a subgraph. Then $|E(G[V(S_{3,2})])\setminus E(S_{3,2})| \neq 2$.
\end{lem}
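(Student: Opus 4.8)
The plan is to follow the same inductive enumeration strategy used in Lemma~\ref{lem:t32x1y0}, now starting from $S_{3,2}$ together with two extra edges. By Lemma~\ref{lem:s32disjoint}, if $|E(G[V(S_{3,2})])\setminus E(S_{3,2})|=2$ then the two extra edges are vertex-disjoint, and each joins a vertex of $\{x_0,x_1,x_2\}$ to a vertex of $\{y_0,y_1\}$ (the argument in the proof of Lemma~\ref{lem:s32disjoint} already rules out an extra edge inside $\{x_0,x_1,x_2\}$ or inside $\{y_0,y_1\}$, so \emph{all} extra edges go across). Since $|\{y_0,y_1\}|=2$ and the two extra edges are disjoint, the two edges must be $\{x_iy_0, x_jy_1\}$ for distinct $i,j\in\{0,1,2\}$; by the symmetry permuting $x_0,x_1,x_2$ and swapping $y_0\leftrightarrow y_1$, we may assume the extra edges are $\{x_0y_0, x_1y_1\}$, leaving $x_2$ adjacent to no vertex of $V(S_{3,2})\setminus\{x\}$. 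Call this $7$-vertex $Q$-subgraph $H$.

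Next I would write down $Q(G)|_{V(H)}$ and apply Proposition~\ref{prop:ev} to pin down the diagonal entries. The forced values should be $d(x_0)=d(x_1)=3$ (each is adjacent to $x$, to one of $y_0,y_1$, and must have room for the condition (iii) lower bound), $d(x_2)\in\{2,3\}$, and $d(y_0),d(y_1)$ constrained to a short list; the interlacing conditions (i) largest eigenvalue $\le 6$, (ii) second-largest $\le 5$, (iii) smallest $\ge 1$ should eliminate most diagonal choices and in particular force $x_2$ to have a neighbour $z_1\notin V(S_{3,2})$ (since $d(x_2)\ge 2$). Then one forms $H'=G[V(S_{3,2})\cup\{z_1\}]$, enumerates the possible adjacencies of $z_1$ to $x_0,x_1,y_0,y_1$ and the value $d(z_1)\in\{1,2,3,4\}$, and checks Proposition~\ref{prop:ev} for each. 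Whenever a surviving configuration still has a vertex whose assigned degree exceeds its current degree in the partial graph, one adds the next vertex $z_2$ (and possibly $z_3$) and repeats, exactly as in Lemma~\ref{lem:t32x1y0}. The claim is that every branch terminates in a $Q$-matrix violating Proposition~\ref{prop:ev} (typically smallest eigenvalue $<1$ in most branches, and largest eigenvalue $>6$ in the one or two stubborn branches), so no such $G$ exists; equivalently $|E(G[V(S_{3,2})])\setminus E(S_{3,2})|\ne 2$.

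The main obstacle is the bookkeeping: the number of branches after adding $z_1$, then $z_2$, is large, and one must be careful that the symmetry reduction (permuting $x_0,x_1,x_2$, swapping $y_0,y_1$, and the $x\leftrightarrow y$-type symmetry of $S_{3,2}$ is broken since $\deg x=4\ne 3=\deg y$) is applied consistently so as not to miss a case. In the spirit of the remark following Lemma~\ref{lem:t32x1y0}, I would delegate the exhaustive case check to the computer implementation described in the Appendix and merely record the outcome, presenting by hand only the initial reduction to the configuration $\{x_0y_0,x_1y_1\}$ and perhaps one representative branch to illustrate. The eigenvalue estimates themselves are routine $7\times 7$ (then $8\times 8$, $9\times 9$) characteristic-polynomial evaluations, so the difficulty is purely organizational rather than mathematical.
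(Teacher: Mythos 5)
Your overall strategy (iterative vertex-by-vertex extension checked against Proposition~\ref{prop:ev}) is the same as the paper's, but your initial case reduction has a genuine gap. You claim that Lemma~\ref{lem:s32disjoint} ``already rules out an extra edge inside $\{x_0,x_1,x_2\}$ or inside $\{y_0,y_1\}$,'' so that both extra edges must cross between the two sides, leaving only $\{x_0y_0,x_1y_1\}$ up to symmetry. That is not what Lemma~\ref{lem:s32disjoint} says: it only forbids two extra edges \emph{sharing a vertex} (and its proof rules out the sub-case where the two free endpoints of such a shared-vertex pair both lie in $\{x_0,x_1,x_2\}$ or both in $\{y_0,y_1\}$). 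A single extra edge such as $x_1x_2$ or $y_0y_1$ is not excluded by anything proved so far --- indeed Lemma~\ref{lem:s32not1} explicitly has to treat $x_0x_1$ and $y_0y_1$ as live possibilities for the one-extra-edge case. Consequently, for two vertex-disjoint extra edges there are three configurations up to symmetry, not one: $\{x_0y_0,x_1y_1\}$, $\{x_0y_0,x_1x_2\}$, and $\{x_0x_1,y_0y_1\}$. Your argument silently discards the last two.

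The omission matters beyond bookkeeping: in the paper's treatment, the configuration $\{x_0y_0,x_1x_2\}$ is exactly the branch that does \emph{not} die by an eigenvalue violation --- after adding the forced neighbour $z_1$ of $x_0$, the resulting $Q$-matrix attains largest eigenvalue exactly $6$, so by Proposition~\ref{prop:ev}(i) the extension must be all of $G$, and the contradiction is that this candidate graph fails to be $Q$-integral. Your proposal anticipates only terminations of the form ``smallest eigenvalue $<1$'' or ``largest eigenvalue $>6$,'' so even if you had included this case, your stated termination criteria would not have caught it. To repair the proof, restore the two missing configurations and, in the branch where the spectral radius reaches $6$, conclude via Proposition~\ref{prop:ev}(i) that the subgraph equals $G$ and then check integrality of its $Q$-spectrum directly.
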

\begin{proof}
  By Lemma~\ref{lem:s32disjoint}, there are exactly two edges that do not share a vertex. 
Then one of the following holds:
\begin{itemize}
  \item[(a)] $a_{x_0y_0}=a_{x_1y_1}=a_{y_0x_0}=a_{y_1x_1}=1$ and $a_{\cdot\cdot}=0$;
  \item[(b)] $a_{x_0y_0}=a_{x_1x_2}=a_{y_0x_0}=a_{x_2x_1}=1$ and $a_{\cdot\cdot}=0$;
  \item[(c)] $a_{x_0x_1}=a_{y_0y_1}=a_{x_1x_0}=a_{y_1y_0}=1$ and $a_{\cdot\cdot}=0$;
\end{itemize} 
  To prove cases (a) and (b), assume that $x_0y_0\in E(G[V(S_{3,2})])\setminus E(S_{3,2})$ and $a_{x_1y_1}+a_{x_1x_2}=1$.
  By the symmetry, $a_{y_1x_1}+a_{x_2x_1}=1$.
  By Proposition~\ref{prop:ev}, we have $d(x_0)=3$.
  Thus, there exists $z_1$ in $V(G)\setminus V(S_{3,2})$ such that $z_1$ is adjacent to $x_0$.
  Then the $Q$-matrix of $G[V(S_{3,2})\cup\{z_1\}]$ has the following form:
  
  {\footnotesize \[
    \left(
    \begin{array}{ccccccc|c}
      4 & 1 & 1      & 1          & 1          & 0          & 0          & 0      \\
      1 & 3 & 0      & 0          & 0          & 1          & 1          & 0      \\
      1 & 0 & 3 & 0          & 0          & 1          & 0          & 1      \\
      1 & 0 & 0      & d(x_1)     & a_{x_1x_2} & 0          & a_{x_1y_1} & a_{x_1z_1}\\
      1 & 0 & 0      & a_{x_2x_1} & d(x_2)     & 0          & 0          & a_{x_2z_1}\\
      0 & 1 & 1      & 0          & 0          & d(y_0)     & 0          & a_{y_0z_1}\\
      0 & 1 & 0      & a_{y_1x_1} & 0          & 0          & d(y_1)     & a_{y_1z_1}\\ \hline
      0 & 0 & 1      & a_{z_1x_1} & a_{z_1x_2} & a_{z_1y_0} & a_{z_1y_1}          & d(z_1)        
    \end{array}
    \right),
  \]}
  where $a_{x_1y_1}+a_{x_1x_2}=1$.
  If $a_{x_1y_1}=1$, then the $Q$-matrix does not satisfy Proposition~\ref{prop:ev}.
  If $a_{x_1x_2}=1$, then the largest eigenvalue of the $Q$-matrix is equal to 6, so we obtain the whole graph $G$, but it is not $Q$-integral, a contradiction.
 By applying the method the case (c) in the same way as in Lemma~\ref{lem:t32x1y0}, we can conclusively show that no $Q$-integral graph exists.
\end{proof}

\begin{lem}\label{lem:s32not1}
  Let $G$ be a connected non-bipartite $Q$-integral graph with $Q$-spectral radius $6$ and maximum edge-degree $5$.
  Suppose that $G$ contains $S_{3,2}$ as a subgraph. 
  Then $|E(G[V(S_{3,2})])\setminus E(S_{3,2})| \neq 1$.
\end{lem}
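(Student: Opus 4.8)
The plan is to run the same iterative subgraph-extension argument used in the proof of Lemma~\ref{lem:t32x1y0}. First I would note that, since $\deg_G(x)=4$ and $\deg_G(y)=3$ already coincide with the degrees of $x$ and $y$ inside $S_{3,2}$, the unique edge $e\in E(G[V(S_{3,2})])\setminus E(S_{3,2})$ is incident to neither $x$ nor $y$; hence $e$ joins two of the five leaves $x_0,x_1,x_2,y_0,y_1$. Using the automorphisms of $S_{3,2}$ --- arbitrary permutations of $\{x_0,x_1,x_2\}$ together with the transposition of $y_0$ and $y_1$ --- this leaves exactly three cases up to isomorphism: (a) $e=x_0x_1$; (b) $e=y_0y_1$; (c) $e=x_0y_0$.

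In each case I would write down $Q(G)|_{V(S_{3,2})}$ with the extra off-diagonal $1$ at the position of $e$ and $0$'s in all the remaining $a_{\cdot\cdot}$ entries, and apply Proposition~\ref{prop:ev} to $G[V(S_{3,2})]$ to restrict the diagonal values $d(\cdot)$; in particular the $2\times2$ principal submatrix on $\{x,x_i\}$ already forces $d(x_i)\geq2$ for every leaf $x_i$, and likewise $d(y_j)\geq2$, while Proposition~\ref{prop:verdeg} caps the vertex degrees by $4$ and the edge-degrees by $5$. For each admissible degree vector some vertex $v\in V(S_{3,2})$ has $d(v)$ strictly larger than its degree inside $G[V(S_{3,2})]$, so there is a vertex $z_1\in V(G)\setminus V(S_{3,2})$ adjacent to $v$; I would adjoin $z_1$, form $G[V(S_{3,2})\cup\{z_1\}]$, enumerate via Proposition~\ref{prop:ev} the possible adjacencies of $z_1$ to $V(S_{3,2})$, and then repeat the grow-by-one-vertex step on each surviving configuration, exactly as in Lemma~\ref{lem:t32x1y0}. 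The boundedness of the degrees and edge-degrees makes the branching finite, so every branch terminates either at a $Q$-matrix violating one of the conditions of Proposition~\ref{prop:ev}, which eliminates that branch, or at a connected $Q$-subgraph whose largest eigenvalue is already $6$, which by Proposition~\ref{prop:ev}(i) must be all of $G$; a direct computation of the spectrum of each such terminal graph shows it is not integral, a contradiction. Since all three cases close, $|E(G[V(S_{3,2})])\setminus E(S_{3,2})|\neq1$.

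I expect the difficulty here to be bookkeeping rather than any single estimate. Case (b) is the worst: adding $e=y_0y_1$ creates a triangle $yy_0y_1$, which relaxes the lower bound coming from Proposition~\ref{prop:ev}(iii) and hence lets many more intermediate configurations survive, producing a large tree of sub-cases; cases (a) and (c) are smaller but still involve extending several leaves one by one. This is exactly the situation the computer procedure described in the Appendix is designed to handle, so in the written proof I would only record the three-case reduction above and then invoke the iterative method of Lemma~\ref{lem:t32x1y0} to dispose of (a), (b), and (c).
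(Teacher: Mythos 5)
Your proposal matches the paper's proof in both structure and substance: the same reduction (via the symmetries of $S_{3,2}$ and the fact that $x,y$ already have full degree) to the three cases $x_0x_1$, $y_0y_1$, $x_0y_0$, followed by the same iterative grow-by-one-vertex elimination against Proposition~\ref{prop:ev} used in Lemma~\ref{lem:t32x1y0}. The paper merely works out the $x_0y_0$ case explicitly (adding $z_1$ adjacent to $x_0$ and $z_2$ adjacent to $x_1$) and delegates the other two to the same method, so your approach is essentially identical.
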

\begin{proof}
  By Lemma~\ref{lem:s32disjoint}, there is exactly one edge. Then one of the following holds:
  \begin{itemize}
    \item[(a)] $a_{x_0y_0}=1$ and $a_{\cdot\cdot}=0$;
    \item[(b)] $a_{x_0x_1}=1$ and $a_{\cdot\cdot}=0$;
    \item[(c)] $a_{y_0y_1}=1$ and $a_{\cdot\cdot}=0$.
  \end{itemize} 
  To prove case (a), assume that $\{x_0y_0\}=E(G[V(S_{3,2})])\setminus E(S_{3,2})$.
  By the symmetry $a_{y_0x_0}=1$.
  By Proposition~\ref{prop:ev}, we have $d(x_0)=3$ and $d(x_1)=3$.
  Thus, there exists $z_1$ in $V(G)\setminus V(S_{3,2})$ such that $z_1$ is adjacent to $x_0$. 
  In this case $a_{z_1u}=0$ for $u\in\{x_1,x_2,y_0\}$.
  Since $d(x_1)=3$, 
  there exists $z_2$ in $V(G)\setminus V(S_{3,2})$ such that $z_2$ is adjacent to $x_1$.
  Then the $Q$-matrix of $G[V(S_{3,2})\cup\{z_1,z_2\}]$ has the following form:
  \[
  \scalemath{0.9}{
    \left(
    \begin{array}{ccccccc|cc}
      4 & 1 & 1      & 1          & 1          & 0          & 0          & 0    &0  \\
      1 & 3 & 0      & 0          & 0          & 1          & 1          & 0    &0  \\
      1 & 0 & d(x_0) & 0          & 0          & 1          & 0          & 1    &a_{x_0,z_2}  \\
      1 & 0 & 0      & d(x_1)     & 0          & 0         & 0 & 0              &1 \\
      1 & 0 & 0      & 0 & d(x_2) & 0          & 0          & 0                 &a_{x_2,z_2}\\
      0 & 1 & 1      & 0          & 0          & d(y_0)     & 0          & 0    &a_{x_3,z_2}\\
      0 & 1 & 0      & 0          & 0          & 0          & d(y_1)     & a_{y_1z_1} & a_{y_1, z_2}\\ \hline
      0 & 0 & 1      & 0 & 0 & 0 & a_{z_1y_1}          & d(z_1)                       & a_{z_1,z_2}  \\
      0 & 0 & a_{z_2,x_0}      & 1 & a_{z_2,x_2} & a_{z_2,y_0} & a_{z_2,y_1}          & a_{z_2,z_1} & d(z_2)         
    \end{array}
    \right)}.
  \]
  For every $d(\cdot)\in\{1,2,3,4\}$ and $a_{\cdot\cdot}\in\{0,1\}$, 
  the $Q$-matrix does not satisfy Proposition~\ref{prop:ev}.
  By applying the method to cases (b) and (c) in the same way as in Lemma~\ref{lem:t32x1y0}, we can conclusively show that no $Q$-integral graph exists.
\end{proof}

\begin{lem}\label{lem:s32not0}
  Let $G$ be a connected non-bipartite $Q$-integral graph with $Q$-spectral radius $6$ and maximum edge-degree $5$.
  Suppose that $G$ contains $S_{3,2}$ as a subgraph. 
  Then $E(G[V(S_{3,2})])\setminus E(S_{3,2})\not= \emptyset$. 
\end{lem}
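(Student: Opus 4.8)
The plan is to mimic the inductive extension strategy established in Lemma~\ref{lem:t32x1y0}, now starting from the configuration where $E(G[V(S_{3,2})])\setminus E(S_{3,2})=\emptyset$, i.e.\ the seven vertices $x_0,x_1,x_2,x,y,y_0,y_1$ induce exactly the tree $S_{3,2}$ in $G$. First I would write down the $Q$-matrix of $S_{3,2}$ with the diagonal entries $d(x_0),d(x_1),d(x_2)\in\{1,2,3\}$, $d(y_0),d(y_1)\in\{1,2,3,4\}$ (since $x$ has degree $4$ already, each $x_i$ has degree at most $3$; since $y$ has degree $3$, each $y_j$ has degree at most $4$), and apply Proposition~\ref{prop:ev} to pin down which diagonal values are admissible. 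In particular the parts (ii) and (iii) of Proposition~\ref{prop:ev} should force $d(x_i)\ge 2$ for at least some of the $x_i$ — the key point being that if some neighbor had degree $1$ the smallest eigenvalue would drop below $1$, or the $Q$-spectral radius would need to be too small.

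Assuming (by symmetry among $x_0,x_1,x_2$) that $d(x_0)\ge 2$, there is a vertex $z_1\in V(G)\setminus V(S_{3,2})$ adjacent to $x_0$. Then I would enumerate, subject to Proposition~\ref{prop:ev}, the possible adjacencies of $z_1$ to the remaining vertices of $S_{3,2}$: $z_1$ cannot be adjacent to $x$ (that would make $\deg_G(x)\ge 5$, contradicting the edge-degree bound), so the candidates are $x_1,x_2,y_0,y_1$. The interlacing/eigenvalue constraints should cut this down to a short list — presumably something like (a) $z_1\sim x_1$ only (and symmetrically $x_2$), (b) $z_1\sim y_0$ only (symmetrically $y_1$), (c) $z_1\sim$ two of these, or (d) $z_1\sim x_0$ only. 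For each surviving case I would form the $8\times 8$ $Q$-matrix $Q(G)|_{V(S_{3,2})\cup\{z_1\}}$ and check Proposition~\ref{prop:ev}; in the cases that are not immediately killed, I would iterate — if some vertex still has $d(\cdot)$ strictly larger than its current degree in the partial graph, a new neighbor $z_2$ must be added, producing a $9\times 9$ matrix, and so on — exactly as in Lemmas~\ref{lem:t32x1y0} and~\ref{lem:s32not1}. The recursion terminates because the vertex degrees are bounded by $4$ and every branch either reaches a $Q$-integral graph with $Q$-spectral radius $6$ and maximum edge-degree $5$ (which we would then record) or violates one of the three conditions of Proposition~\ref{prop:ev}.

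Since the present excerpt of Theorem~\ref{thm:fish}'s proof asserts the conclusion is that the striped fish graph is the \emph{only} non-bipartite $Q$-integral graph of this type, and the striped fish graph already arose in Lemma~\ref{lem:t32_2edge} from the $T_{3,2}$ (one-common-neighbor) branch, I expect every branch of the $S_{3,2}$ (no-common-neighbor) analysis to terminate in a contradiction — no new graph appears. Concretely: after adding $z_1$ (and possibly $z_2$, $z_3$) the $Q$-spectral radius of the induced $Q$-subgraph is forced either to exceed $6$ or the smallest $Q$-eigenvalue to fall below $1$, so by the Interlacing Theorem such a $G$ cannot exist.

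The main obstacle is the sheer branching: unlike the $T_{3,2}$ case, $S_{3,2}$ has three pendant-like vertices $x_0,x_1,x_2$ on the degree-$4$ side and two, $y_0,y_1$, on the degree-$3$ side, so the first extension already splits into several symmetry classes, and each of those may require two or three further extensions before Proposition~\ref{prop:ev} is violated — the matrices reach size $9\times 9$ or $10\times 10$ with many free $0/1$ and $\{1,2,3,4\}$ parameters. This is precisely the situation the paper flags as making the proof ``lengthy if we explain all the details step by step,'' so in the write-up I would, as the authors do elsewhere, reduce the admissible cases by Proposition~\ref{prop:ev} and symmetry to a manageable list, display one representative $Q$-matrix per branch, and then invoke the computer-assisted iterative procedure of the Appendix to certify that none of the branches yields a $Q$-integral graph — concluding that $E(G[V(S_{3,2})])\setminus E(S_{3,2})\ne\emptyset$.
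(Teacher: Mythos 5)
Your proposal follows exactly the paper's approach: the paper's own proof of this lemma is a one-line reference back to the iterative extension method of Lemma~\ref{lem:t32x1y0} (constrain the $Q$-matrix of $S_{3,2}$ via Proposition~\ref{prop:ev}, add a neighbor $z_1$ of some $x_i$ with $d(x_i)\ge 2$, enumerate its admissible adjacencies, and iterate until every branch violates Proposition~\ref{prop:ev}), which is precisely what you describe. The only nitpick is that $z_1\not\sim x$ follows from the vertex-degree bound $\deg_G(x)\le 4$ of Proposition~\ref{prop:verdeg} (with $x$ already having four neighbors in $S_{3,2}$) rather than directly from the edge-degree bound, but this does not affect the argument.
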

\begin{proof}
 We assume that $|E(G[V(S_{3,2})])\setminus E(S_{3,2})|=0$. 
   Similarly to Lemma~\ref{lem:t32x1y0}, we can show that there is no $Q$-integral graph in this case.
\end{proof}


\subsection{Main Results}\label{sec:proof}

\noindent \textbf{\textit{Proof of Theorem~\ref{thm:fish}}}.
Suppose there exist two adjacent vertices $x$ with degree $4$ and $y$ with degree $3$ in $G$.
By Lemma~\ref{2common}, $x$ and $y$ have at most one common neighbor.
If there is a vertex that is adjacent to both $x$ and $y$, then there is only one graph by Lemma~\ref{lem:t32_2edge}.
If there is no common neighbor of $x$ and $y$, there is no graph, by Lemmas~\ref{lem:s32edgeleq2},~\ref{lem:s32not2},~\ref{lem:s32not1} and~\ref{lem:s32not0}.
This completes the proof. 
\hfill \textsquare

\vspace{3mm}

\noindent \textbf{\textit{Proof of Theorem~\ref{thm:main}}}. 
Let $G$ be a connected non-bipartite $Q$-integral graph with $Q$-spectral radius at most $5$. 
Then $G$ is isomorphic to one of the graphs $G_1$ and $G_2$ in Figure~\ref{fig:main} (see Table~\ref{tab:list}).
Let us assume that $G$ has $Q$-spectral radius $6$.
Then $G$ is isomorphic to one of the 17 graphs in Theorem~\ref{thm:PS19} (i), or $G$ has maximum edge-degree $5$.
Among the 17 graphs, five are non-bipartite: $G_3$, $G_4$, $G_5$, $G_6$, and $G_7$ in Figure~\ref{fig:main}.
Now, we consider $G$ with a maximum edge-degree $5$.
Then there exists an edge $xy$ of $G$ whose edge-degree is $5$.

By Proposition~\ref{prop:verdeg}, the maximum vertex degree of $G$ is at most $4$.
Hence, the vertex degrees of $x$ and $y$ are $4$ and $3$.
Then $G$ is the striped fish graph by Theorem~\ref{thm:fish}.
This completes the proof. 
\hfill \textsquare

\section*{Appendix}

The proof process can be structured into the following pseudocode.
To confirm the truth of the results proven at each step of Theorem~\ref{thm:fish}, we converted this pseudocode into actual code, executed it, and verified the results.
We note that it can also be used for non-bipartite $Q$-integral graphs with $Q$-spectral radius greater than $6$.

\begin{algorithm}[!ht] \label{alg:GraphExtension}
    \LinesNumbered
    \caption{{\bf Algorithm}}
    \KwInput{
        $H$ : a graph
    }
    \KwOutput{
        $\mathcal{H}$ : a list of graphs
        }
    \Begin{
        $\mathcal{H}$ $\leftarrow$ an empty list

        \texttt{d\_list} $\leftarrow$ an empty list

        \For{
            $d:V(H) \rightarrow \{1,2, \ldots, \rho-2\}$
        }{
            \If{\rm
                $(H,d)$ satisfies Proposition~\ref{prop:ev}
            }{
                Add $d$ to \texttt{d\_list}
            }
        }

        Take $\tilde{v}$ not in $V(H)$

        \For{\rm
                a connected graph $\tilde{H}$ by adding $\tilde{v}$ to $H$
            }{
                \If{\rm
                    $(\tilde{H},\tilde{d})$ satisfies Proposition~\ref{prop:ev} for some $\tilde{d}: V(\tilde{H}) \rightarrow \{1,2, \ldots, \rho-2\}$ 
                }{
                    \uIf{\rm 
                        $\exists v \in V(H)$ such that $\deg_{H}(v) < \min_{d \in \mathtt{d\_list}} d(v)$
                    }{
                        \If{\rm
                            $v$ is adjacent to $\tilde{v}$ in $\tilde{H}$
                        }{
                            Add $\tilde{H}$ to $\mathcal{H}$
                        }                        
                    }
                    \Else{
                        Add $\tilde{H}$ to $\mathcal{H}$
                    }
                }
            }
        \Return{$\mathcal{H}$}
    }
\end{algorithm}

\section*{Acknowledgments}
%
Jeong Rye Park was supported by the NRF of Korea grant funded by the Korea government (NRF-2018R1D1A1B07048197 and NRF-2021R1I1A1A01057767).
Jongyook Park was supported by the NRF of Korea grant funded by the Korea government (MSIT) (NRF-2020R1A2C1A01101838).
Yoshio Sano was supported by JSPS KAKENHI Grant Number JP19K03598.
Semin Oh was supported by the National Research Foundation (NRF) and the Ministry of Trade, Industry and Energy (MOTIE) of Korea grant funded by the Korea government (NRF-2020R1I1A1A01073818
 and 
 MOTIE-P142700014).

\section*{Declarations}

\textbf{Conflict of interest} The authors declare no conflict of interest.

%

\printbibliography 

@article {BCRSS02,
    AUTHOR = {Bali\'{n}ska, K. and Cvetkovi\'{c}, D. and Radosavljevi\'{c},
              Z. and Simi\'{c}, S. and Stevanovi\'{c}, D.},
     TITLE = {A survey on integral graphs},
   JOURNAL = {Univ. Beograd. Publ. Elektrotehn. Fak. Ser. Mat.},
  FJOURNAL = {Univerzitet u Beogradu. Publikacije Elektrotehni\v{c}kog
              Fakulteta. Serija Matematika},
    VOLUME = {13},
      YEAR = {2002},
     PAGES = {42--65},
      ISSN = {0353-8893},
   MRCLASS = {05C50},
  MRNUMBER = {1992839},
MRREVIEWER = {Peter\ Rowlinson},
       DOI = {10.2298/PETF0213042B},
       URL = {https://doi.org/10.2298/PETF0213042B},
}

@article {Cve75,
    AUTHOR = {Cvetkovi\'{c}, Drago\v{s} M.},
     TITLE = {Cubic integral graphs},
   JOURNAL = {Univ. Beograd. Publ. Elektrotehn. Fak. Ser. Mat. Fiz.},
  FJOURNAL = {Univerzitet u Beogradu. Publikacije Elektrotehni\v{c}kog
              Fakulteta. Serija Matematika i Fizika},
    NUMBER = {498-541},
      YEAR = {1975},
     PAGES = {107--113},
      ISSN = {0522-8441},
   MRCLASS = {05C99},
  MRNUMBER = {409278},
MRREVIEWER = {A.\ J.\ Schwenk},
}

@article {CRS2,
    AUTHOR = {Cvetkovi\'{c}, Drago\v{s} and Rowlinson, Peter and Simi\'{c},
              Slobodan K.},
     TITLE = {Signless {L}aplacians of finite graphs},
   JOURNAL = {Linear Algebra Appl.},
  FJOURNAL = {Linear Algebra and its Applications},
    VOLUME = {423},
      YEAR = {2007},
    NUMBER = {1},
     PAGES = {155--171},
      ISSN = {0024-3795,1873-1856},
   MRCLASS = {05C50},
  MRNUMBER = {2312332},
MRREVIEWER = {Hong\ Zhang},
       DOI = {10.1016/j.laa.2007.01.009},
       URL = {https://doi.org/10.1016/j.laa.2007.01.009},
}

@book{god,
    AUTHOR = {Godsil, Chris and Royle, Gordon},
     TITLE = {Algebraic graph theory},
    SERIES = {Graduate Texts in Mathematics},
    VOLUME = {207},
 PUBLISHER = {Springer-Verlag, New York},
      YEAR = {2001},
     PAGES = {xx+439},
   MRCLASS = {05-02 (05C50 05E30)},
  MRNUMBER = {1829620},
MRREVIEWER = {Robin\ J.\ Wilson},
       DOI = {10.1007/978-1-4613-0163-9},
       URL = {https://doi.org/10.1007/978-1-4613-0163-9},
}

@incollection {HS,
    AUTHOR = {Harary, Frank and Schwenk, Allen J.},
     TITLE = {Which graphs have integral spectra?},
 BOOKTITLE = {Graphs and combinatorics ({P}roc. {C}apital {C}onf., {G}eorge
              {W}ashington {U}niv., {W}ashington, {D}.{C}., 1973)},
    SERIES = {Lecture Notes in Math.},
    VOLUME = {Vol. 406},
     PAGES = {45--51},
 PUBLISHER = {Springer, Berlin-New York},
      YEAR = {1974},
   MRCLASS = {05C99},
  MRNUMBER = {387124},
MRREVIEWER = {M.\ Doob},
}

@article {OPPS,
    AUTHOR = {Oh, Semin and Park, Jeong Rye and Park, Jongyook and Sano,
              Yoshio},
     TITLE = {On {$Q$}-integral graphs with {$Q$}-spectral radius 6},
   JOURNAL = {Linear Algebra Appl.},
  FJOURNAL = {Linear Algebra and its Applications},
    VOLUME = {654},
      YEAR = {2022},
     PAGES = {267--288},
      ISSN = {0024-3795,1873-1856},
   MRCLASS = {05C50},
  MRNUMBER = {4481271},
       DOI = {10.1016/j.laa.2022.08.031},
       URL = {https://doi.org/10.1016/j.laa.2022.08.031},
}

@article {PS23,
    AUTHOR = {Pervin, Jesmina and Selvaganesh, Lavanya},
     TITLE = {Connected {$Q$}-integral graphs with maximum edge-degree less
              than or equal to 8},
   JOURNAL = {Discrete Math.},
  FJOURNAL = {Discrete Mathematics},
    VOLUME = {346},
      YEAR = {2023},
    NUMBER = {3},
     PAGES = {Paper No. 113265, 12},
      ISSN = {0012-365X,1872-681X},
   MRCLASS = {05C50},
  MRNUMBER = {4515779},
       DOI = {10.1016/j.disc.2022.113265},
       URL = {https://doi.org/10.1016/j.disc.2022.113265},
}

@article {PS19,
    AUTHOR = {Park, Jongyook and Sano, Yoshio},
     TITLE = {On {$Q$}-integral graphs with edge-degrees at most six},
   JOURNAL = {Linear Algebra Appl.},
  FJOURNAL = {Linear Algebra and its Applications},
    VOLUME = {577},
      YEAR = {2019},
     PAGES = {384--411},
      ISSN = {0024-3795,1873-1856},
   MRCLASS = {05C50},
  MRNUMBER = {3950071},
MRREVIEWER = {Milan\ Pokorn\'{y}},
       DOI = {10.1016/j.laa.2019.04.015},
       URL = {https://doi.org/10.1016/j.laa.2019.04.015},
}

@article {SS08,
    AUTHOR = {Simi\'{c}, Slobodan K. and Stani\'{c}, Zoran},
     TITLE = {{$Q$}-integral graphs with edge-degrees at most five},
   JOURNAL = {Discrete Math.},
  FJOURNAL = {Discrete Mathematics},
    VOLUME = {308},
      YEAR = {2008},
    NUMBER = {20},
     PAGES = {4625--4634},
      ISSN = {0012-365X,1872-681X},
   MRCLASS = {05C50 (05C62)},
  MRNUMBER = {2438168},
       DOI = {10.1016/j.disc.2007.08.055},
       URL = {https://doi.org/10.1016/j.disc.2007.08.055},
}

@article {Sta07,
    AUTHOR = {Stani\'{c}, Zoran},
     TITLE = {There are exactly 172 connected {$Q$} integral graphs up to 10
              vertices},
   JOURNAL = {Novi Sad J. Math.},
  FJOURNAL = {Novi Sad Journal of Mathematics},
    VOLUME = {37},
      YEAR = {2007},
    NUMBER = {2},
     PAGES = {193--205},
      ISSN = {1450-5444,2406-2014},
   MRCLASS = {05C50},
  MRNUMBER = {2401613},
}

@book {CDS80,
    AUTHOR = {Cvetkovi\'{c}, Drago\v{s} M. and Doob, Michael and Sachs,
              Horst},
     TITLE = {Spectra of graphs},
    SERIES = {Pure and Applied Mathematics},
    VOLUME = {87},
      NOTE = {Theory and application},
 PUBLISHER = {Academic Press, Inc. [Harcourt Brace Jovanovich, Publishers],
              New York-London},
      YEAR = {1980},
     PAGES = {368},
      ISBN = {0-12-195150-2},
   MRCLASS = {05Cxx (05-01 92A40)},
  MRNUMBER = {572262},
MRREVIEWER = {N.\ Trinajsti\'{c}},
}

@book {CRS10,
    AUTHOR = {Cvetkovi\'{c}, Drago\v{s} and Rowlinson, Peter and Simi\'{c},
              Slobodan},
     TITLE = {An introduction to the theory of graph spectra},
    SERIES = {London Mathematical Society Student Texts},
    VOLUME = {75},
 PUBLISHER = {Cambridge University Press, Cambridge},
      YEAR = {2010},
     PAGES = {xii+364},
      ISBN = {978-0-521-13408-8},
   MRCLASS = {05-02 (05C50)},
  MRNUMBER = {2571608},
MRREVIEWER = {Ligong\ Wang},
}

@book {BH11,
    AUTHOR = {Brouwer, Andries E. and Haemers, Willem H.},
     TITLE = {Spectra of graphs},
    SERIES = {Universitext},
 PUBLISHER = {Springer, New York},
      YEAR = {2012},
     PAGES = {xiv+250},
      ISBN = {978-1-4614-1938-9},
   MRCLASS = {05-01 (05C50)},
  MRNUMBER = {2882891},
MRREVIEWER = {Silvia\ Gago},
       DOI = {10.1007/978-1-4614-1939-6},
       URL = {https://doi.org/10.1007/978-1-4614-1939-6},
}

@article {WS79,
    AUTHOR = {Watanabe, Mamoru and Schwenk, Allen J.},
     TITLE = {Integral starlike trees},
   JOURNAL = {J. Austral. Math. Soc. Ser. A},
  FJOURNAL = {Australian Mathematical Society. Journal. Series A. Pure
              Mathematics and Statistics},
    VOLUME = {28},
      YEAR = {1979},
    NUMBER = {1},
     PAGES = {120--128},
      ISSN = {0263-6115},
   MRCLASS = {05C05 (05C50)},
  MRNUMBER = {541173},
}

@article {W79,
    AUTHOR = {Watanabe, Mamoru},
     TITLE = {Note on integral trees},
   JOURNAL = {Math. Rep. Toyama Univ.},
  FJOURNAL = {Toyama University. Mathematics Reports},
    VOLUME = {2},
      YEAR = {1979},
     PAGES = {95--100},
      ISSN = {0386-832X},
   MRCLASS = {05C05},
  MRNUMBER = {542382},
MRREVIEWER = {R.\ J.\ Wilson},
}

@article {HN98,
    AUTHOR = {H\'{\i}c, Pavol and Nedela, Roman},
     TITLE = {Balanced integral trees},
   JOURNAL = {Math. Slovaca},
  FJOURNAL = {Mathematica Slovaca},
    VOLUME = {48},
      YEAR = {1998},
    NUMBER = {5},
     PAGES = {429--445},
      ISSN = {0139-9918,1337-2211},
   MRCLASS = {05C50},
  MRNUMBER = {1697609},
MRREVIEWER = {Cyriel\ Van Nuffelen},
}

@article {BC76,
    AUTHOR = {Bussemaker, F. C. and Cvetkovi\'{c}, D. M.},
     TITLE = {There are exactly 13 connected, cubic, integral graphs},
   JOURNAL = {Univ. Beograd. Publ. Elektrotehn. Fak. Ser. Mat. Fiz.},
  FJOURNAL = {Univerzitet u Beogradu. Publikacije Elektrotehni\v{c}kog
              Fakulteta. Serija Matematika i Fizika},
    NUMBER = {544-576},
      YEAR = {1976},
     PAGES = {43--48},
      ISSN = {0522-8441},
   MRCLASS = {05C99},
  MRNUMBER = {465944},
MRREVIEWER = {Horst\ Sachs},
}

@incollection {Sch78,
    AUTHOR = {Schwenk, Allen J.},
     TITLE = {Exactly thirteen connected cubic graphs have integral spectra},
 BOOKTITLE = {Theory and applications of graphs ({P}roc. {I}nternat.
              {C}onf., {W}estern {M}ich. {U}niv., {K}alamazoo, {M}ich.,
              1976)},
    SERIES = {Lecture Notes in Math.},
    VOLUME = {Vol. 642},
     PAGES = {516--533},
 PUBLISHER = {Springer, Berlin-New York},
      YEAR = {1978},
      ISBN = {3-540-08666-8},
   MRCLASS = {05C99 (92A40)},
  MRNUMBER = {499520},
}

@incollection {BS01a,
    AUTHOR = {Bali\'{n}ska, Krystyna T. and Simi\'{c}, Slobodan K.},
     TITLE = {Some remarks on integral graphs with maximum degree four},
      NOTE = {XIV Conference on Applied Mathematics (Pali\'{c}, 2000)},
   JOURNAL = {Novi Sad J. Math.},
  FJOURNAL = {Novi Sad Journal of Mathematics},
    VOLUME = {31},
      YEAR = {2001},
    NUMBER = {1},
     PAGES = {19--25},
      ISSN = {0352-0900},
   MRCLASS = {05C50},
  MRNUMBER = {1835165},
}

@incollection {BS01b,
    AUTHOR = {Bali\'{n}ska, Krystyna T. and Simi\'{c}, Slobodan K.},
     TITLE = {The nonregular, bipartite, integral graphs with maximum degree
              4. {I}. {B}asic properties},
      NOTE = {Graph theory (Kazimierz Dolny, 1997)},
   JOURNAL = {Discrete Math.},
  FJOURNAL = {Discrete Mathematics},
    VOLUME = {236},
      YEAR = {2001},
    NUMBER = {1-3},
     PAGES = {13--24},
      ISSN = {0012-365X,1872-681X},
   MRCLASS = {05C50},
  MRNUMBER = {1830594},
       DOI = {10.1016/S0012-365X(00)00426-X},
       URL = {https://doi.org/10.1016/S0012-365X(00)00426-X},
}

@incollection {RS86,
    AUTHOR = {Radosavljevi\'{c}, Zoran and Simi\'{c}, Slobodan},
     TITLE = {There are just thirteen connected nonregular nonbipartite
              integral graphs having maximum vertex degree four (a shortened
              report)},
 BOOKTITLE = {Graph theory ({D}ubrovnik, 1985)},
     PAGES = {183--187},
 PUBLISHER = {Univ. Novi Sad, Novi Sad},
      YEAR = {1986},
   MRCLASS = {05C75},
  MRNUMBER = {903468},
}

@article {RS95,
    AUTHOR = {Simi\'{c}, Slobodan and Radosavljevi\'{c}, Zoran},
     TITLE = {The nonregular, nonbipartite, integral graphs with maximum
              degree four},
   JOURNAL = {J. Combin. Inform. System Sci.},
  FJOURNAL = {Journal of Combinatorics, Information \& System Sciences},
    VOLUME = {20},
      YEAR = {1995},
    NUMBER = {1-4},
     PAGES = {9--26},
      ISSN = {0250-9628},
   MRCLASS = {05C50},
  MRNUMBER = {1735034},
}

@article{CGT74,
  title={Conjugated molecules having integral graph spectra},
  author={Cvetkovi{\'c}, D and Gutman, I and Trinajsti{\'c}, N},
  journal={Chemical Physics Letters},
  volume={29},
  number={1},
  pages={65--68},
  year={1974},
  publisher={Elsevier}
}

@article{Lep05,
  title={There are 93 non-regular, bipartite integral graphs with maximum degree four},
  author={Lepovi{\'c}, M and Simi{\'c}, SK and Bali{\'n}ska, KT and Zwierzy{\'n}ski, KT},
  journal={The Technical University of Pozna{\'n}, CSC Report},
  volume={511},
  year={2005}
}

@article{PS24,
  title={On Connected Bipartite $Q$-Integral Graphs},
  author={Pervin, Jesmina and Selvaganesh, Lavanya},
  journal={Communications in Combinatorics and Optimization},
  year={2024},
  publisher={Azarbaijan Shahid Madani University}
}

\end{document}